\documentclass[a4paper,10pt]{article}
\usepackage{amsmath,amsfonts,amssymb,amscd}
\usepackage{indentfirst,graphicx,epsfig}
\usepackage{graphicx,psfrag}
\usepackage{amsthm, amsfonts, color}
\usepackage[bookmarksnumbered, plainpages]{hyperref}
\input{epsf}
\usepackage{fixmath}
\usepackage{tikz}
\usepackage{mathrsfs}
\usepackage{mathtools}
\usepackage{stmaryrd}
\usepackage{psfrag}
\usepackage{epsf}
\usepackage{url,rotating}
\usepackage[cp1250]{inputenc}
\usepackage{amsmath,amsfonts,amssymb,latexsym}
\usepackage{setspace}
\usepackage{enumitem}
\usepackage{multirow}
\textheight=24cm \textwidth=16cm \topmargin=-0.1cm
\oddsidemargin=0cm \evensidemargin=0cm

\title{The energy of random signed graph}
\author{\small{Shuchao Li\thanks{S. Li was partially supported by the National Natural Science Foundation of China (Grant Nos. 11671164, 11271149)
}, Shujing Wang\thanks{Corresponding author}}\\
{\small  School of Mathematics and Statistics}\\
{\small Central China Normal University, Wuhan 430079, P.R. China}\\
{\small Email: lscmath@mail.ccnu.edu.cn, wang06021@126.com}}
\date{}

\begin{document}

\makeatletter
  \newcommand\figcaption{\def\@captype{figure}\caption}
  \newcommand\tabcaption{\def\@captype{table}\caption}
\makeatother
\newtheorem{pro}{Proposition}[section]
\newtheorem{defi}{Definition}[section]
\newtheorem{theorem}{Theorem}[section]
\newtheorem{lemma}[theorem]{Lemma}
\newtheorem{coro}[theorem]{Corollary}

\maketitle

\begin{abstract}
A signed graph $\Gamma(G)$ is a graph with a sign attached to each of its
edges, where $G$ is the underlying graph of $\Gamma(G)$. The energy of a signed graph $\Gamma(G)$ is the sum of the absolute values of the eigenvalues of the adjacency matrix $A(\Gamma(G))$ of $\Gamma(G)$. The random signed graph model $\mathcal{G}_n(p, q)$ is defined as follows: Let $p, q \ge 0$ be fixed, $0 \le p+q \le 1$. Given a set of $n$ vertices, between each pair of distinct vertices there is either a positive edge with probability $p$ or a negative edge with probability $q$, or else there is no edge with probability $1-(p+ q)$. The edges between different pairs of vertices are chosen
independently. In this paper, we obtain an exact estimate of energy for almost all signed graphs. Furthermore, we establish lower and upper bounds to the energy of random multipartite signed graphs. \\[2mm]

\noindent{\bf Keywords:}energy, random signed graph, empirical spectra distribution, random multipartite signed graph.

\noindent{\bf AMS subject classification 2010:} 05C50, 15A48

\end{abstract}

\section{Introduction}
Let $G$ be a simple graph with vertex set $V(G)=\{v_1, v_2,\ldots, v_n\}$ and
edge set $E(G)$. Let $A(G)$ be the adjacency matrix of $G$ and $\lambda_1(G)\ge \lambda_2(G)\ge\ldots\ge \lambda_n(G)$ be the eigenvalues of $A(G)$. The \emph{energy} of $G$ is defined as
$$\mathcal{E}(G)=\sum_{i=1}^n|\lambda_{i}(G)|.$$
This graph invariant is derived from the total $\pi$-electron energy \cite{Y} from chemistry and was first introduced by Gutman \cite{Gutman} in 1978. Since then, graph energy has been studied extensively by lots of mathematicians and chemists. For results on the study of the energy of graphs, we refer the reader to the books \cite{GL, LSG}.

A signed graph $\Gamma(G)=(G,\sigma)$ is a graph with a sign attached to each of its edges, and consists of a simple graph $G=(V,E)$, referred to as its underlying graph, and a mapping $\sigma: E\rightarrow\{+,-\}$, the edge labeling. To avoid confusion, we also write $V(\Gamma(G))$ instead of $V$, $E(\Gamma(G))$ instead of $E$, and $E(\Gamma(G))=E^{\sigma}$. Signed graphs were introduced by Harary \cite{Harary} in connection with the study of the theory
of social balance in social psychology (see \cite{De}). The matroids of graphs were extended to those of signed graphs by Zaslavsky \cite{Za}, and the Matrix-Tree Theorem for signed graphs
was obtained by Zaslavsky \cite{Za}, Chaiken \cite{Ch}, and also by Belardo and Simi\'c \cite{Be}.

The \textit{adjacency matrix} of $\Gamma(G)$ is $A(\Gamma(G))=(a_{ij}^{\sigma})$ with $a_{ij}^{\sigma}=\sigma (v_iv_j)a_{ij}$, where $(a_{ij})_{n\times n}$ is the adjacency matrix of the underlying graph $G$. In the case of $\sigma=+$, which is an all-positive edge labeling, $A(G,+)$ is exactly the classical adjacency matrix of $G$. So a simple graph is always assumed as a signed graph with all edges positive. The concept of energy of a graph was extended to signed graphs by Germina, Hameed
and Zaslavsky \cite{za} and they defined the \emph{energy} of a signed graph $\Gamma(G)$ to be the sum of absolute values of eigenvalues of the eigenvalues of $A(\Gamma(G))$, i.e.,
$$\mathcal{E}(\Gamma(G))=\sum_{i=1}^n|\lambda_i(\Gamma(G))|,$$
where $\lambda_1(\Gamma(G))\ge \lambda_2(\Gamma(G))\ge \ldots\ge \lambda_n(\Gamma(G))$ are the eigenvalues of $A(\Gamma(G))$.

Evidently, one can immediately get the energy of a graph (resp. signed graph) by computing the eigenvalues of the matrices $A(G)$ (resp. $A(\Gamma(G))$). It is rather hard, however, to compute the eigenvalues for a large matrix, even for a large symmetric matrix. So many researchers established a lot of lower and upper bounds to estimate the invariant for some classes of graphs among which the bipartite
graphs are of particular interest. For further details, we refer the readers to the comprehensive survey \cite{GLZ}.

In 1950s, Erd\v os and R\'enyi \cite{ER} founded the theory of random graphs. The Erd\v os-R\'enyi random graph
model $\mathcal{G}_n(p)$ consists of all graphs on $n$ vertices in which the edges are chosen independently with probability $p$, where $p$ is a constant and $0 < p < 1$. In \cite{Li1} and \cite {Li3}, Du, Li and Li have considered the energy of the Erd\v os and R\'enyi model $G_n(p)$ and have the following result.
\begin{theorem}\label{Th1}
Almost every random graph $G_n(p)$ enjoys the equation as follows:
$$E(G_n(p))=n^{3/2}\left(\frac{8}{3\pi}\sqrt{p-p^2}+o(1)\right).$$
\end{theorem}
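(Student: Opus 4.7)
The plan is to reduce the computation to Wigner's semicircle law by centering the adjacency matrix, and then show that the centering operation perturbs the energy by only $O(n)$, which is negligible compared with the $n^{3/2}$ leading order.

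Write $A := A(G_n(p))$ and consider the centered matrix $M := A - p(J - I)$, where $J$ denotes the $n\times n$ all-ones matrix. Then $M$ is symmetric, its diagonal is zero, and its strictly upper-triangular entries are i.i.d.\ with mean $0$, variance $\sigma^2 := p(1-p)$, and uniformly bounded by $1$; in particular $M$ is a bounded Wigner matrix. By Wigner's semicircle theorem, the empirical spectral distribution $\mu_n^M := \tfrac{1}{n}\sum_{i=1}^n \delta_{\lambda_i(M)/\sqrt{n}}$ converges weakly almost surely to the semicircle distribution $\mu_{sc}$ with density $f_{sc}(x) = \tfrac{1}{2\pi\sigma^2}\sqrt{4\sigma^2 - x^2}$ on $[-2\sigma,\,2\sigma]$. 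Because the entries of $M$ are uniformly bounded, the Bai--Yin theorem yields $\lambda_1(M)/\sqrt{n}\to 2\sigma$ almost surely, providing the uniform integrability needed to upgrade weak convergence to convergence of the first absolute moment:
$$\frac{\mathcal{E}(M)}{n^{3/2}} \;=\; \int |x|\,d\mu_n^M(x) \;\longrightarrow\; \int_{-2\sigma}^{2\sigma} |x|\,f_{sc}(x)\,dx \;=\; \frac{8\sigma}{3\pi} \;=\; \frac{8}{3\pi}\sqrt{p - p^2},$$
where the explicit value comes from the standard substitution $u = 4\sigma^2 - x^2$.

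It then remains to show $|\mathcal{E}(A) - \mathcal{E}(M)| = o(n^{3/2})$. The matrix $A - M = p(J - I)$ has eigenvalues $p(n-1)$ (once) and $-p$ (with multiplicity $n-1$), so its trace norm (equivalently, its energy) is $2p(n-1)$. Since $\mathcal{E}(\cdot)$ coincides with the trace norm $\|\cdot\|_*$ on symmetric matrices, the reverse triangle inequality for this norm gives
$$\bigl|\mathcal{E}(A) - \mathcal{E}(M)\bigr| \;\le\; \|A - M\|_* \;=\; 2p(n-1) \;=\; O(n),$$
which is $o(n^{3/2})$. Combining the two displays yields the claim.

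The main obstacle is the passage from weak convergence of $\mu_n^M$ to convergence of its first absolute moment, since $|x|$ is unbounded on the real line; this forces some a.s.\ control on $\lambda_1(M)$, and the Bai--Yin theorem (available precisely because the entries of $M$ are bounded) is the cleanest tool. It is worth emphasising that a na\"ive Weyl-type bound $|\lambda_i(A) - \lambda_i(M)| \le \|A - M\|_{op} = O(n)$ summed over $i$ would give only $O(n^2)$ for the energy difference and would be useless; the trace-norm inequality is indispensable because it exploits the fact that $p(J - I)$ is essentially of rank one.
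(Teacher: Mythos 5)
Your argument is correct and follows essentially the same route as the paper's (given there in Section 2 for the signed generalization, of which Theorem \ref{Th1} is the case $q=0$): center the adjacency matrix by $p(J-I)$ to obtain a Wigner matrix, apply the semicircle law to extract the $\frac{8}{3\pi}\sqrt{p-p^2}\,n^{3/2}$ main term, and absorb the remaining perturbation via Ky Fan's trace-norm inequality (Lemma \ref{lemma5}), which gives an $O(n)$ error since $p(J-I)$ has energy $2p(n-1)$. The only real difference is the uniform-integrability step: you invoke the Bai--Yin theorem to confine the spectrum of $n^{-1/2}M$ to a compact interval almost surely, whereas the paper reaches the same conclusion from convergence of the second moment (via the moment method) combined with the bound $|x|\le x^2$ off $[-2,2]$ and a generalized dominated convergence lemma; both are valid, and yours is arguably the cleaner choice given that the entries are bounded.
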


In \cite{balance}, the authors defined a probabilistic model in which relations between individuals are assumed to be random. A good mathematical model for representing such random social structures is the so-called \emph{random signed graph} $G_n(p, q)$ defined as follows. Let $p, q \ge0$ be fixed, $0 \le p+q \le 1$. Given a set of $n$ vertices, between each pair of distinct vertices there is either a positive edge with probability $p$ or a negative edge with probability $q$, or else there is no edge with probability $1-(p + q)$. The edges between different pairs of vertices are chosen
independently.
Throughout this article the expression ``almost surely'' (a.s.) means ``with probability tending to 1 as $n$ tends to infinity.''

In this paper, we shall obtain an exact estimate of energy for almost all signed graphs. Furthermore, we establish lower and upper bounds to the energy of random multipartite signed graphs.
\section{The energy of random signed graph}
Let $X$ be an $n\times n$ Hermitian matrix
and denote its eigenvalues by $\lambda_i(X), i=1,2,\ldots,n$. \textit{The empirical spectral distribution (ESD)} of $X$ is defined by
$$
\Phi_{X}(x) =\frac{1}{n}\#\{\lambda_{i}(X)| \lambda_{i}(X)\le x, i=1,2,\ldots,n\},
$$
where $\#\{.\}$ denotes the number of elements in the set indicated.

By the definition of ESD of a matrix $X$, the following two facts are obvious:
\begin{itemize}
  \item Fact 1:For any positive $k$, $\int x^kd\Phi_{X}(x)=\frac{1}{n}\sum_{i=1}^n\lambda^k_i(X)=\frac{1}{n}trace(X^k).$
  \item Fact 2:$\int |x|d\Phi_{X}(x)=\frac{1}{n}\sum_{i=1}^n|\lambda_i(X)|.$
\end{itemize}
By Fact 2, one can readily evaluate the energy of $G_n(p,q)$ once the ESD of $A(G_n(p,q))$ is known.

One of the main problems in the theory of random matrices (RMT) is to investigate the convergence of the sequence of empirical spectral distributions $\Phi_{X_n}$ for a given sequence of random matrices $\{X_n\}$. The limit distribution (possibly defective; that is, total mass is less than 1 when some eigenvalues tend to $\pm\infty$), which is
usually nonrandom, is called the \textit{limiting spectral distribution (LSD)} of the
sequence $\{X_n\}$.
In fact, the study on the spectral distributions of random matrices is rather abundant and active,
which can be traced back to \cite{W}. We refer the readers to \cite{Bai, D, M} for an overview and some spectacular progress in this field. One important achievement in that field is Wigner's semi-circle law which characterizes the limiting spectral distribution of the empirical spectral distribution of eigenvalues for a sort of random matrix.

In order to characterize the statistical properties of the wave functions of quantum mechanical systems, Wigner in 1950s investigated the spectral distribution for a sort of random matrix, so-called \textit{Wigner matrix},
$X_n:=(x_{ij}), 1 \le i, j \le n$,
which satisfies the following properties:
\begin{itemize}
  \item $x_{ij}$'s are independent random variables with $x_{ij}$ = $x_{ji}$;
  \item the $x_{ii}$'s have the same distribution $F_1$, while the $x_{ij}$'s ($i \neq j$) have the same distribution $F_2$;
  \item $Var(x_{ij})=\sigma_2^2<\infty$ for all $1\le i< j\le n$.
\end{itemize}

Wigner \cite{Wigner1, Wigner2} considered the limiting spectral distribution (LSD) of $X_n$, and obtained his semi-circle law.

\begin{theorem}\label{Wigner}
Let $X_n$ be a Wigner matrix. Then
$$
\lim_{n\rightarrow\infty}\Phi_{n^{-1/2}X_n}(x)=\Phi_{0,\sigma_2}(x) \ \  a.s.
$$
i.e., with probability 1, the ESD $\Phi_{n^{-1/2}X_n}(x)$ converges weakly to a distribution $\Phi_{0,\sigma_2}(x)$ as $n$ tends to infinity, where $\Phi_{0,\sigma_2}(x)$ has the density
$$
\phi_{0,\sigma_2}(x)=\left\{
          \begin{array}{ll}
            \frac{1}{2\pi\sigma_2^2}\sqrt{4\sigma_2^2-x^2}, & \hbox{if $|x|\leq 2\sigma_2$;} \\
            0, & \hbox{otherwise.}
          \end{array}
        \right.
$$
\end{theorem}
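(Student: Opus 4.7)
The plan is to prove Theorem \ref{Wigner} by the classical method of moments. Since the semicircle distribution $\Phi_{0,\sigma_2}$ is compactly supported, it is uniquely determined by its moments, so it suffices to show that for every fixed integer $k\ge 1$,
$$\int x^k\, d\Phi_{n^{-1/2}X_n}(x)\longrightarrow \int x^k\, d\Phi_{0,\sigma_2}(x)\quad \text{almost surely.}$$

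First I would compute the target moments. The substitution $x=2\sigma_2\cos\theta$ gives $\int x^{2k}\,d\Phi_{0,\sigma_2}(x)=\sigma_2^{2k}C_k$ with $C_k=\frac{1}{k+1}\binom{2k}{k}$ the $k$-th Catalan number, while all odd moments vanish by symmetry. Next, by Fact 1,
$$\int x^k\, d\Phi_{n^{-1/2}X_n}(x)=\frac{1}{n^{1+k/2}}\sum_{i_1,\ldots,i_k=1}^{n}x_{i_1i_2}x_{i_2i_3}\cdots x_{i_ki_1}.$$
Taking expectation, the independence of the entries together with $Var(x_{ij})=\sigma_2^2$ (and the implicit centering $E[x_{ij}]=0$, which is the standard normalization for Wigner's law) force the nonzero contributions to come only from closed walks $i_1\to i_2\to\cdots\to i_k\to i_1$ in which every edge is traversed an even number of times. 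A classical combinatorial argument then shows that among such walks only those whose edge-multigraph is a tree with every edge doubled reach the leading order $n^{1+k/2}$. These correspond bijectively to non-crossing pair partitions of $\{1,\ldots,k\}$, which exist only for even $k$ and are counted by $C_{k/2}$. This yields convergence of the expected moments to the target values $\sigma_2^{k}C_{k/2}$ (even $k$) and $0$ (odd $k$).

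To upgrade convergence in expectation to almost sure convergence, I would estimate the variance of the $k$-th moment. Expanding $Var\bigl(\frac{1}{n^{1+k/2}}trace(X_n^k)\bigr)$ as a double sum over pairs of closed walks, ``independent'' pairs cancel, and the remaining pairs must share at least one edge; a careful vertex/edge count then gives variance of order $O(n^{-2})$. The Borel--Cantelli lemma converts this into almost sure convergence of each fixed moment, and the method of moments (combined with a routine tightness argument that follows from the uniform boundedness of the fourth moment) upgrades this to almost sure weak convergence of $\Phi_{n^{-1/2}X_n}$ to $\Phi_{0,\sigma_2}$.

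The main obstacle is the combinatorial heart of the argument: showing that closed walks failing the ``every edge doubled, forming a tree'' condition contribute only $o(n^{1+k/2})$. This amounts to tracking, for each walk, the number of distinct vertices visited versus the number of distinct edges used, and proving that any deviation from the tree-like pattern costs at least one factor of $n$. The cleanest bookkeeping is via Dyck paths or non-crossing partitions, and the same machinery simultaneously delivers the $O(n^{-2})$ variance bound required for the almost sure conclusion.
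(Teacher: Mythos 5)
First, a point of comparison: the paper does not prove Theorem \ref{Wigner} at all --- it is quoted as Wigner's semicircle law and attributed to \cite{Wigner1,Wigner2} (see also \cite{Bai}), so there is no in-paper argument to measure yours against. Your sketch is the classical moment-method proof, and its combinatorial core is the standard and correct route: the Catalan moments $\sigma_2^{2k}C_k$ of the semicircle density, the expansion of $n^{-(1+k/2)}\,\mathrm{trace}(X_n^k)$ over closed walks, the identification of the leading-order walks with doubled trees (equivalently non-crossing pair partitions), the $O(n^{-2})$ variance bound, and Borel--Cantelli followed by the method of moments for a compactly supported, moment-determined limit.

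There is, however, a genuine gap between what your argument proves and what the theorem as stated asserts. The hypotheses here only give $\mathrm{Var}(x_{ij})=\sigma_2^2<\infty$ for $i<j$: the off-diagonal entries need not be centered, no moments beyond the second are assumed to exist, and the diagonal entries follow an arbitrary distribution $F_1$. Your trace expansion needs $E[x_{i_1i_2}x_{i_2i_3}\cdots x_{i_ki_1}]$ to be finite for every $k$, so before the moment method can even begin one must (i) truncate the entries at a slowly growing level and re-center and re-scale, controlling the effect on the ESD by a rank inequality or a Hoffman--Wielandt type bound in the L\'evy metric; (ii) remove the common mean $\mu=E[x_{12}]$, which amounts to subtracting $\mu(J_n-I_n)$, a rank-one perturbation plus a diagonal correction, and hence shifts the ESD by at most $O(1/n)$ in Kolmogorov distance; and (iii) dispose of the diagonal, whose contribution after the $n^{-1/2}$ scaling is negligible in the same sense. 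You acknowledge the centering only as an ``implicit'' normalization and omit truncation entirely. This is not cosmetic: with $\mu\neq 0$ the scaled matrix has an outlier eigenvalue of order $\sqrt{n}$, so the moments $\int x^k\,d\Phi_{n^{-1/2}X_n}(x)$ diverge for $k\geq 4$ even though the ESD itself still converges weakly to the semicircle --- exactly why the paper's Lemma \ref{lemma1} adds the mean-zero hypothesis that the theorem does not carry. With these standard preliminary reductions supplied, your outline becomes a complete and correct proof.
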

Employing the moment approach, one can show that
\begin{lemma}\cite{Li1}\label{lemma1}
Let $X_n$ be a Wigner matrix such that each entry of $X_n$ has mean 0, then for each positive integer $k$,
\begin{equation}\label{1}
    \lim_{n\rightarrow\infty}\int x^kd\Phi_{n^{-1/2}X_n}(x)=\int x^kd\Phi(x) \ \ \ a.s.
\end{equation}
\end{lemma}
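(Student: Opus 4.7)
The plan is to use the classical moment method of random matrix theory. By Fact 1,
$$\int x^{k}\,d\Phi_{n^{-1/2}X_n}(x) = \frac{1}{n}\,\mathrm{tr}\bigl((n^{-1/2}X_n)^{k}\bigr) = \frac{1}{n^{1+k/2}}\sum_{i_{1},\ldots,i_{k}=1}^{n} x_{i_{1}i_{2}}x_{i_{2}i_{3}}\cdots x_{i_{k}i_{1}},$$
so the problem reduces to controlling a sum indexed by closed walks $(i_{1},i_{2},\ldots,i_{k},i_{1})$ of length $k$ on the complete graph with vertex set $[n]$. I would first prove convergence in expectation and then upgrade to the almost-sure statement via a variance estimate and Borel--Cantelli.

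For the mean, since the $x_{ij}$ are independent with $\mathbb{E}[x_{ij}]=0$, a walk contributes nothing to $\mathbb{E}[\mathrm{tr}(X_{n}^{k})]$ unless every unordered edge $\{i_{j},i_{j+1}\}$ it uses appears at least twice. A standard combinatorial lemma then shows that such a walk with $r$ distinct vertices and $s$ distinct edges satisfies $r\le s+1\le k/2+1$. The extremal walks with $r=k/2+1$ (possible only when $k=2m$ is even) are in bijection with pairs consisting of a labeled rooted plane tree on $m$ edges and a depth-first traversal; there are $C_{m}$ such rooted plane trees, each walk traversing every edge exactly twice and contributing $\sigma_{2}^{2m}$ in expectation. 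Counting labelings yields $n^{m+1}(1+o(1))$ extremal walks, while the walks with $r<k/2+1$ contribute only $O(n^{k/2})$. Dividing by $n^{1+k/2}$ gives
$$\lim_{n\to\infty}\mathbb{E}\!\left[\int x^{k}\,d\Phi_{n^{-1/2}X_n}(x)\right] = \begin{cases} C_{m}\,\sigma_{2}^{2m}, & k=2m, \\ 0, & k \text{ odd},\end{cases}$$
which coincides with $\int x^{k}\,d\Phi_{0,\sigma_{2}}(x)$, since $C_{m}\sigma_{2}^{2m}$ is precisely the $(2m)$-th moment of the semicircle density in Theorem~\ref{Wigner}.

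To promote convergence in expectation to almost-sure convergence, I would estimate the variance of $n^{-1-k/2}\mathrm{tr}(X_{n}^{k})$. Expanding $\mathrm{tr}(X_{n}^{k})^{2}$ as a double sum over pairs $(W_{1},W_{2})$ of closed walks and applying the ``each edge at least twice'' principle to the merged multigraph $W_{1}\cup W_{2}$, the only pairs with nonzero covariance are those sharing at least one edge. A vertex-counting argument parallel to the one above yields $\mathrm{Var}\bigl(\mathrm{tr}(X_{n}^{k})\bigr)=O(n^{k})$, and hence $\mathrm{Var}\bigl(n^{-1-k/2}\mathrm{tr}(X_{n}^{k})\bigr)=O(n^{-2})$. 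This tail is summable in $n$, so Chebyshev's inequality together with the Borel--Cantelli lemma upgrades the convergence of means to the desired a.s.\ limit.

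The main obstacle is the combinatorial bookkeeping in the mean computation: rigorously classifying the closed walks by their underlying skeleta, establishing the inequality $r\le s+1$, identifying the extremal walks with rooted plane trees and enumerating them by $C_{m}$, and verifying that all non-tree walks together contribute $O(n^{k/2})$. The variance step is a parallel but lengthier version of the same analysis applied to pairs of walks, and presents no new conceptual difficulty once the single-walk enumeration is in hand.
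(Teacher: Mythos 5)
Your proposal is the standard moment (trace) method, which is precisely the route the paper itself points to: the lemma is introduced with ``Employing the moment approach, one can show that'' and is otherwise deferred to \cite{Li1}, so there is no independent proof in the paper to diverge from, and your outline --- the closed-walk expansion of $\frac{1}{n}\mathrm{tr}\bigl((n^{-1/2}X_n)^k\bigr)$, the each-edge-at-least-twice reduction, the Catalan enumeration of doubled plane trees giving the semicircle moments, and the $O(n^{k})$ variance bound (using that a shared edge forces the loss of two vertices relative to disjoint double trees) combined with Chebyshev and Borel--Cantelli --- is the correct and standard argument. The one caveat worth recording is that carrying this out requires all moments of the off-diagonal entries to be finite, a hypothesis not visible in the lemma as stated (which assumes only finite variance) but automatic in the paper's application, where the entries of $\overline{S}$ are bounded.
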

Let $G_n(p,q)$ be a random signed graph in $\mathcal{G}(n,p,q)$ and $A(G_n(p,q))$ be the adjacency matrix of $G_n(p,q)$. Evidently, $A(G_n(p,q))$ is a symmetric matrix and the entries of $A(G_n(p,q))=(s_{ij})_{n\times n}$ satisfy that for $i=1,2,\ldots,n$, $s_{ii}=0$ and for $1\le i\neq j\le n$, $s_{ij}$'s are independent random variables and have the same distribution with mean $p-q$ and $Var(s_{ij})=\sigma^2=p+q-(p-q)^2<\infty$.

Set $\overline{S}=A(G_n(p,q))-(p-q)(A(K_n))$, where $A(K_n)=J_n-I_n$ be the adjacency matrix of the complete graph $K_n$. One can see that each entry of $\overline{S}$ has mean 0 and variance $\sigma^2=p+q-(p-q)^2<\infty$ for all $1\le i< j\le n$. Then by Theorem \ref{Wigner}, we have that
\begin{equation}\label{A}
    \lim_{n\rightarrow\infty}\Phi_{n^{-1/2}\overline{S}}(x)=\Phi_{0,\sigma}(x) \ \  a.s.
\end{equation}
Furthermore, as each entry of $\overline{S}$ has mean 0, by Lemma \ref{lemma1}, we have that for any positive integer $k$,
\begin{equation}\label{B}
    \lim_{n\rightarrow\infty}\int x^kd\Phi_{n^{-1/2}\overline{S}}(x)=\int x^kd\Phi_{0,\sigma}(x) \ \ \ a.s.
\end{equation}
where $\Phi_{0,\sigma}(x)$ has the density $\phi_{0,\sigma}(x)$.
%
%We define \emph{the energy $\mathcal{E}(M)$ of a matrix} $M$ as the sum
%of absolute values of the eigenvalues of $M$. By virtue of the following two lemmas, we shall formulate
%an estimate of the energy $\mathcal{E}(\overline{S})$, and then establish the exact estimate of $\mathcal{E}(A(G_n(p,q))) = \mathcal{E}(G_n(p,q))$.

Let $I$ be the interval $[-2,2]$.
\begin{lemma}\label{lemma2}
Let $I^c$ be the set $\mathbb{R}\setminus I$. Then
\begin{equation}\label{C}
    \lim_{n\rightarrow\infty}\int_{I^c} x^2d\Phi_{n^{-1/2}\overline{S}}(x)=\int_{T^c} x^2d\Phi_{0,\sigma}(x) \ \ \ a.s.
\end{equation}
\end{lemma}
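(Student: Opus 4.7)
The plan is to leverage the support of the limiting semicircle density together with the second-moment convergence in Lemma \ref{lemma1}. The key observation is that since
$\sigma^{2}=p+q-(p-q)^{2}\le p+q\le 1$,
we have $2\sigma\le 2$, so $\mathrm{supp}(\Phi_{0,\sigma})=[-2\sigma,2\sigma]\subseteq I$. In particular, the right-hand side of (\ref{C}) vanishes, and the statement reduces to proving that $\int_{I^{c}} x^{2}\,d\Phi_{n^{-1/2}\overline{S}}(x)\to 0$ almost surely (with the displayed $T^{c}$ evidently meaning $I^{c}$).

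First, I would evaluate the second moment of the limiting distribution: a direct calculation using the semicircular density $\phi_{0,\sigma}$ gives $\int x^{2}\,d\Phi_{0,\sigma}(x)=\sigma^{2}$. Second, I would apply Lemma \ref{lemma1} to $\overline{S}$ (whose entries have mean $0$ and finite variance $\sigma^{2}$) with $k=2$, obtaining
$$\lim_{n\to\infty}\int x^{2}\,d\Phi_{n^{-1/2}\overline{S}}(x)=\sigma^{2}\qquad\text{a.s.}$$
Third, I would show the ``inside'' convergence
$$\lim_{n\to\infty}\int_{I} x^{2}\,d\Phi_{n^{-1/2}\overline{S}}(x)=\int_{I} x^{2}\,d\Phi_{0,\sigma}(x)=\sigma^{2}\qquad\text{a.s.,}$$
using the weak convergence (\ref{A}). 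Subtracting the two limits yields
$\int_{I^{c}}x^{2}\,d\Phi_{n^{-1/2}\overline{S}}(x)\to 0$ almost surely, which is exactly (\ref{C}).

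The delicate step is the third one, because the integrand $x^{2}\mathbf{1}_{I}(x)$ is discontinuous at $x=\pm 2$, so one cannot quote the definition of weak convergence directly. I would handle this by a standard sandwich/portmanteau argument: for each $\varepsilon>0$ choose continuous compactly supported functions $g_{\varepsilon}\le x^{2}\mathbf{1}_{I}\le f_{\varepsilon}$ that differ from $x^{2}\mathbf{1}_{I}$ only on an $\varepsilon$-neighborhood of $\{\pm 2\}$. By weak convergence, $\int f_{\varepsilon}\,d\Phi_{n^{-1/2}\overline{S}}\to\int f_{\varepsilon}\,d\Phi_{0,\sigma}$ and likewise for $g_{\varepsilon}$. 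Since $\Phi_{0,\sigma}$ is absolutely continuous, $\Phi_{0,\sigma}(\{\pm 2\})=0$, so letting $\varepsilon\to 0$ forces $\int f_{\varepsilon}\,d\Phi_{0,\sigma},\int g_{\varepsilon}\,d\Phi_{0,\sigma}\to\sigma^{2}$, which pinches the restricted integral to the desired limit.

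The main obstacle is precisely this boundary issue; everything else is a routine combination of the semicircle second moment and Lemma \ref{lemma1}. A slight alternative, avoiding portmanteau altogether, is to observe that $\|n^{-1/2}\overline{S}\|\to 2\sigma<2+\delta$ almost surely (a consequence of the Bai--Yin/Wigner norm theorem applied to $\overline{S}$), which forces every eigenvalue to lie in $I$ eventually and makes $\int_{I^{c}}x^{2}\,d\Phi_{n^{-1/2}\overline{S}}(x)$ identically $0$ for all large $n$; however, in the spirit of the paper's moment-based setup I expect the sandwich approach above to be the intended route.
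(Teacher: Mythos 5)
Your proposal follows essentially the same route as the paper: split the integral over $I$ and $I^{c}$, obtain the restricted convergence on $I$ from the weak convergence (\ref{A}), get the full-line convergence from Lemma \ref{lemma1} with $k=2$, and subtract. If anything, your portmanteau/sandwich treatment of the discontinuity of $x^{2}\mathbf{1}_{I}$ at $\pm 2$ (using $\Phi_{0,\sigma}(\{\pm 2\})=0$) is more careful than the paper's argument, which appeals to a ``density'' of the empirical spectral distribution (an atomic measure) and the bounded convergence theorem; your added observation that $2\sigma\le 2$ makes the right-hand side vanish is also a correct simplification.
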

\begin{proof}
Suppose $\phi_{n^{-1/2}\overline{S}}$ is the density of $\Phi_{n^{-1/2}\overline{S}}$. According to Eq. \ref{A}, with probability 1,
$\phi_{n^{-1/2}\overline{S}}$ converges to $\phi_{0,\sigma}(x)$ almost everywhere as $n$ tends to infinity. Since $\phi_{0,\sigma}(x)$ is bounded on $[-2\sigma, 2\sigma]\subset I$, it follows that with probability 1, $x^2\phi_{n^{-1/2}\overline{S}}$ bounded almost everywhere on $I$. Invoking bounded convergence theorem yields
$$
\lim_{n\rightarrow\infty}\int_{I} x^2d\Phi_{n^{-1/2}\overline{S}}(x)=\int_{I} x^2d\Phi_{0,\sigma}(x) \ \ \ a.s.
$$
Combining the above fact with Eq. \ref{B}, we have
\begin{eqnarray*}
% \nonumber to remove numbering (before each equation)
  \lim_{n\rightarrow\infty}\int_{I^C} x^2d\Phi_{n^{-1/2}\overline{S}}(x) &=&
  \lim_{n\rightarrow\infty}\left(\int x^2d\Phi_{n^{-1/2}\overline{S}}(x)-\int_{I} x^2d\Phi_{n^{-1/2}\overline{S}}(x)\right) \\
   &=& \lim_{n\rightarrow\infty}\int x^2d\Phi_{n^{-1/2}\overline{S}}(x)-\lim_{n\rightarrow\infty}\int_{I} x^2d\Phi_{n^{-1/2}\overline{S}}(x) \\
   &=& \int x^2d\Phi_{0,\sigma}(x)-\int_{I} x^2d\Phi_{0,\sigma}(x)\\
   &=& \int_{I^C} x^2d\Phi_{0,\sigma}(x)\ \ a.s.
\end{eqnarray*}
\end{proof}
To investigate the convergence of $\int |x|d\Phi_{n^{-1/2}\overline{S}}(x)$, we need the following result:
\begin{lemma}\label{lemma3}\cite{Bi}
 Let $\mu$ be a measure. Suppose that functions $a_n, b_n, f_n$ converges almost
everywhere to functions $a, b, f$, respectively, and that $a_n \le f_n \le b_n$ almost everywhere. If $ \int a_n d\mu\rightarrow \int a d\mu$
and $\int b_n d\mu \rightarrow \int b d\mu$, then $\int f_n d\mu\rightarrow \int f d\mu$.
\end{lemma}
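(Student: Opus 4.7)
The plan is to prove Lemma \ref{lemma3} by a classical ``double Fatou'' sandwich. The hypothesis $a_n\le f_n\le b_n$ almost everywhere makes the two sequences
\begin{equation*}
g_n := f_n - a_n \ge 0 \quad\text{and}\quad h_n := b_n - f_n \ge 0
\end{equation*}
nonnegative almost everywhere, and the assumed almost-everywhere convergences $a_n\to a$, $b_n\to b$, $f_n\to f$ immediately give $g_n\to f-a$ and $h_n\to b-f$ almost everywhere. This reduces the problem to controlling two nonnegative sequences, to which Fatou's lemma applies without any further integrability hypothesis beyond what is already in the statement.

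Applying Fatou's lemma to $\{g_n\}$ yields
\begin{equation*}
\int(f-a)\,d\mu \;\le\; \liminf_{n\to\infty}\int(f_n-a_n)\,d\mu \;=\; \liminf_{n\to\infty}\int f_n\,d\mu - \int a\,d\mu,
\end{equation*}
where the last equality uses the hypothesis $\int a_n\,d\mu\to\int a\,d\mu$. Rearranging gives $\int f\,d\mu \le \liminf_n \int f_n\,d\mu$. Symmetrically, Fatou's lemma applied to $\{h_n\}$ produces
\begin{equation*}
\int(b-f)\,d\mu \;\le\; \liminf_{n\to\infty}\int(b_n-f_n)\,d\mu \;=\; \int b\,d\mu - \limsup_{n\to\infty}\int f_n\,d\mu,
\end{equation*}
which rearranges to $\limsup_n\int f_n\,d\mu \le \int f\,d\mu$. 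Sandwiching these two inequalities forces $\lim_n\int f_n\,d\mu=\int f\,d\mu$, as desired.

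The only subtlety I anticipate is a bookkeeping issue rather than a conceptual one: the manipulations $\int(f_n-a_n)\,d\mu=\int f_n\,d\mu-\int a_n\,d\mu$ and $\int(b_n-f_n)\,d\mu=\int b_n\,d\mu-\int f_n\,d\mu$ are only legitimate if the individual integrals are finite (so that $\infty-\infty$ never arises). This is guaranteed by the convergence statements $\int a_n\,d\mu\to\int a\,d\mu$ and $\int b_n\,d\mu\to\int b\,d\mu$, which implicitly require these limits to be finite, combined with the pointwise bound $a_n\le f_n\le b_n$ which yields $\int a_n\,d\mu\le \int f_n\,d\mu\le \int b_n\,d\mu$ and hence integrability of every $f_n$. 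Once this is observed, the two Fatou applications close the argument with no further input; the result is just the generalized dominated convergence theorem where the fixed dominator of the classical version is replaced by the convergent sandwich $(a_n, b_n)$.
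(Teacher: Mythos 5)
Your proof is correct. The paper does not prove this lemma at all --- it is quoted from Billingsley's \emph{Probability and Measure} as a known result (a generalized dominated convergence theorem, sometimes attributed to Pratt) --- so there is no in-paper argument to compare against; your double application of Fatou's lemma to the nonnegative sequences $f_n-a_n$ and $b_n-f_n$ is the standard proof, and you correctly flag and resolve the one genuine subtlety, namely that the splitting $\int(f_n-a_n)\,d\mu=\int f_n\,d\mu-\int a_n\,d\mu$ requires the integrability of $f_n$, which follows from $|f_n|\le|a_n|+|b_n|$.
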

By using Lemmas \ref{lemma2} and \ref{lemma3}, we have that
\begin{lemma}\label{lemma4}
Let $\overline{S}$ be the matrix defined above, then
\begin{equation}\label{D}
    \lim_{n\rightarrow\infty}\int |x|d\Phi_{n^{-1/2}\overline{S}}(x)=\int |x|d\Phi_{0,\sigma}(x) \ \ \ a.s.
\end{equation}
\end{lemma}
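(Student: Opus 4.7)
The plan is to reduce everything to Lemma \ref{lemma2} by the tail-splitting trick already used there, writing
$$\int|x|\,d\Phi_{n^{-1/2}\overline{S}}(x) = \int_I|x|\,d\Phi_{n^{-1/2}\overline{S}}(x) + \int_{I^c}|x|\,d\Phi_{n^{-1/2}\overline{S}}(x)$$
and handling the two pieces separately.

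On the compact interval $I=[-2,2]$ the function $|x|$ is bounded by $2$, and by Eq. \ref{A} the density $\phi_{n^{-1/2}\overline{S}}$ converges almost everywhere to $\phi_{0,\sigma}$ with probability $1$. So the bounded convergence argument already exhibited in the proof of Lemma \ref{lemma2} applies verbatim, with the bounded function $|x|$ in place of $x^2$, and gives
$$\lim_{n\to\infty}\int_I |x|\,d\Phi_{n^{-1/2}\overline{S}}(x)=\int_I |x|\,d\Phi_{0,\sigma}(x) \quad a.s.$$

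For the tail $I^c$ the key observation is that $|x|\ge 2$ there, so $0\le |x|\le \tfrac{1}{2}x^2$ on $I^c$. This puts us in the exact setting of Lemma \ref{lemma3}: take $\mu$ to be Lebesgue measure restricted to $I^c$, and put $a_n(x)\equiv 0$, $f_n(x)=|x|\phi_{n^{-1/2}\overline{S}}(x)$, $b_n(x)=\tfrac{1}{2}x^2\phi_{n^{-1/2}\overline{S}}(x)$, together with the analogous limits $a\equiv 0$, $f(x)=|x|\phi_{0,\sigma}(x)$, $b(x)=\tfrac{1}{2}x^2\phi_{0,\sigma}(x)$. The almost-everywhere convergence $a_n\to a$, $b_n\to b$, $f_n\to f$ follows from Eq. \ref{A}; the required $\int b_n\,d\mu\to\int b\,d\mu$ is exactly Lemma \ref{lemma2}, and $\int a_n\,d\mu\to\int a\,d\mu$ is trivial. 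Lemma \ref{lemma3} then yields
$$\lim_{n\to\infty}\int_{I^c}|x|\,d\Phi_{n^{-1/2}\overline{S}}(x)=\int_{I^c}|x|\,d\Phi_{0,\sigma}(x) \quad a.s.,$$
and adding the two pieces produces Eq. \ref{D}.

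The one step with any content is the tail estimate: because $|x|$ is unbounded, the weak convergence supplied by Eq. \ref{A} does not by itself give convergence of $\int|x|\,d\Phi_{n^{-1/2}\overline{S}}(x)$, and one must rule out mass escaping to infinity. This is precisely the role played by Lemma \ref{lemma2}, whose $x^2$ control on $I^c$ dominates $|x|$ there; everything else is squeeze-theorem bookkeeping. I expect no further calculational obstacles.
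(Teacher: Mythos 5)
Your proposal is correct and follows essentially the same route as the paper: split the integral over $I$ and $I^c$, use bounded convergence on $I$, and squeeze $|x|$ between $0$ and a multiple of $x^2$ on $I^c$ via Lemma \ref{lemma2} and Lemma \ref{lemma3}. The only cosmetic difference is your dominating function $\tfrac{1}{2}x^2$ in place of the paper's $x^2$, which changes nothing.
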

\begin{proof}
Suppose $\phi_{n^{-1/2}\overline{S}}$ is the density of $\Phi_{n^{-1/2}\overline{S}}$. According to Eq. \ref{A}, with probability 1,
$\phi_{n^{-1/2}\overline{S}}$ converges to $\phi_{0,\sigma}(x)$ almost everywhere as $n$ tends to infinity. Since $\phi_{0,\sigma}(x)$ is bounded on $[-2\sigma, 2\sigma]\subset I$, it follows that with probability 1, $|x|\phi_{n^{-1/2}\overline{S}}$ bounded almost everywhere on $I$. Invoking bounded convergence theorem yields
$$
\lim_{n\rightarrow\infty}\int_{I} |x|d\Phi_{n^{-1/2}\overline{S}}(x)=\int_{I} |x|d\Phi_{0,\sigma}(x) a.s.
$$
Obviously, $0\le |x|\le x^2$ if $x\in I^c$. Set $a_n=0, b_n=x^2\phi_{n^{-1/2}\overline{S}}$ and $f_n=|x|\phi_{n^{-1/2}\overline{S}}$. By Lemma \ref{lemma2} and Lemma \ref{lemma3}, we have
$$
\lim_{n\rightarrow\infty}\int_{I^c} |x|d\Phi_{n^{-1/2}\overline{S}}(x)=\int_{I^c} |x|d\Phi_{0,\sigma}(x) a.s.
$$
Consequently,
\begin{eqnarray*}
% \nonumber to remove numbering (before each equation)
  \lim_{n\rightarrow\infty}\int |x|d\Phi_{n^{-1/2}\overline{S}}(x) &=&
  \lim_{n\rightarrow\infty}\int_{I} |x|d\Phi_{n^{-1/2}\overline{S}}(x)+\lim_{n\rightarrow\infty}\int_{I^c} |x|d\Phi_{n^{-1/2}\overline{S}}(x) \\
   &=&  \int_{I} |x|d\Phi_{0,\sigma}(x)+\int_{I^c} |x|d\Phi_{0,\sigma}(x)\\
   &=& \int |x|d\Phi_{0,\sigma}(x) \ \ \ a.s.
\end{eqnarray*}
\end{proof}
We define \emph{the energy $\mathcal{E}(M)$ of a matrix} $M$ as the sum
of absolute values of the eigenvalues of $M$. By fact 2 and Lemma \ref{lemma4}, we can deduce that
\begin{eqnarray*}
% \nonumber to remove numbering (before each equation)
  \frac{1}{n}\mathcal{E}(n^{-1/2}\overline{S}) &=& \int |x|d\Phi_{n^{-1/2}\overline{S}}(x) \\
   &\rightarrow& \int |x|d\Phi_{0,\sigma}(x) \ \ \  a.s. \ \ \ \ (n\rightarrow\infty) \\
   &=& \frac{1}{2\pi\sigma^2}\int_{-2\sigma}^{2\sigma}|x|\sqrt{4\sigma^2-x^2}dx \\
   &=& \frac{8}{3\pi}\sigma=\frac{8}{3\pi} \sqrt{p+q-(p-q)^2}.
\end{eqnarray*}

Therefore, a.s. the energy of $\overline{S}$ enjoys the equation as follows:
\begin{equation}\label{F}
    \mathcal{E}(\overline{S})=n^{1/2}\mathcal{E}(n^{-1/2}\overline{S})
    =n^{3/2}\left(\frac{8}{3\pi} \sqrt{p+q-(p-q)^2}+o(1)\right).
\end{equation}
\begin{lemma}\cite{Fan}\label{lemma5}
Let $X, Y, Z$ be real symmetric matrices of order $n$ such that $X+Y=Z$. Then
$$
\mathcal{E}(X)+\mathcal{E}(Y)\ge \mathcal{E}(Z).
$$
\end{lemma}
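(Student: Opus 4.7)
The plan is to recognise $\mathcal{E}(\cdot)$, on the space of real symmetric $n\times n$ matrices, as the Schatten $1$-norm (trace/nuclear norm) and to derive the desired inequality as its triangle inequality. Since a real symmetric matrix $M$ has singular values $\sigma_i(M) = |\lambda_i(M)|$, we have
\[
\mathcal{E}(M) \;=\; \sum_{i=1}^n |\lambda_i(M)| \;=\; \sum_{i=1}^n \sigma_i(M),
\]
so once we know that $M\mapsto \sum_i \sigma_i(M)$ satisfies the triangle inequality on real symmetric matrices, we are done.

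The key step is to establish the variational characterisation
\[
\mathcal{E}(M) \;=\; \max\bigl\{\mathrm{tr}(MU) : U \in \mathbb{R}^{n\times n},\; U^{T}U = I_n\bigr\}.
\]
The upper bound follows from von Neumann's trace inequality, $\mathrm{tr}(MU) \le \sum_i \sigma_i(M)\,\sigma_i(U)$, together with the observation that every singular value of an orthogonal matrix equals $1$. Equality is attained by taking $U = Q\,D\,Q^{T}$, where $M = Q\Lambda Q^{T}$ is the spectral decomposition and $D$ is the diagonal matrix of signs of the eigenvalues of $M$; a direct computation then gives $\mathrm{tr}(MU) = \sum_i |\lambda_i(M)|$. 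Note that such a $U$ is genuinely orthogonal since $D$ is.

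Once the formula above is in hand, the conclusion is immediate: for every orthogonal $U$, the hypothesis $X+Y=Z$ yields
\[
\mathrm{tr}(ZU) \;=\; \mathrm{tr}(XU)+\mathrm{tr}(YU) \;\le\; \mathcal{E}(X)+\mathcal{E}(Y),
\]
and taking the maximum over $U$ on the left delivers $\mathcal{E}(Z) \le \mathcal{E}(X)+\mathcal{E}(Y)$.

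The main obstacle is the variational characterisation, which rests on von Neumann's trace inequality; this is a classical piece of matrix analysis, but deriving it from scratch requires a non-trivial argument (typically via the singular value decomposition together with a Hardy--Littlewood--P\'olya rearrangement/majorisation step). Once that machinery is granted, Lemma~\ref{lemma5} is essentially a one-line consequence, which is why the paper simply cites it to \cite{Fan}.
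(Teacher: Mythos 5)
Your proof is correct. The paper offers no argument of its own here---it simply cites Fan's 1951 paper---so there is no internal proof to compare against; your route via the trace (Schatten $1$) norm and the variational formula $\mathcal{E}(M)=\max\{\mathrm{tr}(MU):U^{T}U=I_n\}$ is the standard one, and every step checks: the maximiser $U=QDQ^{T}$ is orthogonal provided you set $D_{ii}=\pm 1$ (not $0$) at any zero eigenvalue, and the subadditivity then falls out of linearity of the trace. One remark worth making: for \emph{symmetric} $M$ you do not need the full strength of von Neumann's trace inequality, which is the only nonelementary ingredient you invoke. Writing $M=Q\Lambda Q^{T}$ gives
\[
\mathrm{tr}(MU)=\mathrm{tr}\bigl(\Lambda\, Q^{T}UQ\bigr)=\sum_{i=1}^{n}\lambda_i\,(Q^{T}UQ)_{ii},
\]
and since $Q^{T}UQ$ is orthogonal, each diagonal entry satisfies $|(Q^{T}UQ)_{ii}|=|q_i^{T}Uq_i|\le\|q_i\|\,\|Uq_i\|=1$ by Cauchy--Schwarz, whence $\mathrm{tr}(MU)\le\sum_i|\lambda_i|$. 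Substituting this for the von Neumann step makes your proof entirely self-contained, with no majorisation or rearrangement machinery required.
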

It is easy to see that the eigenvalues of $A(K_n)$ are $n-1$ and -1 of $n-1$ times. Consequently, $\mathcal{E}((p-q)A(K_n))=2|p-q|(n-1).$
Note that $A(G_n(p,q))=\overline{S}+(p-q)A(K_n)$, it follows from Lemma \ref{lemma5} that with probability 1,
\begin{eqnarray*}
% \nonumber to remove numbering (before each equation)
  \mathcal{E}(A(G_n(p,q))) &\le& \mathcal{E}(\overline{S})+\mathcal{E}((p-q)A(K_n)) \\
  &=& n^{3/2}\left(\frac{8}{3\pi} \sqrt{p+q-(p-q)^2}+o(1)\right)+2|p-q|(n-1).
\end{eqnarray*}
Consequently,
\begin{equation}\label{G}
\lim_{n\rightarrow\infty}\mathcal{E}(A(G_n(p,q)))/n^{3/2}\le
\left(\frac{8}{3\pi} \sqrt{p+q-(p-q)^2}+o(1)\right).
\end{equation}

One the other hand, as $\overline{S}=A(G_n(p,q))-(p-q)A(K_n)$, by Lemma \ref{lemma5}, that with probability 1,
\begin{eqnarray*}
% \nonumber to remove numbering (before each equation)
  \mathcal{E}(A(G_n(p,q))) &\ge& \mathcal{E}(\overline{S})-\mathcal{E}((p-q)A(K_n)) \\
  &=& n^{3/2}\left(\frac{8}{3\pi} \sqrt{p+q-(p-q)^2}+o(1)\right)-2|p-q|(n-1).
\end{eqnarray*}
Thus we have that with probability 1,
\begin{equation}\label{H}
\lim_{n\rightarrow\infty}\mathcal{E}(A(G_n(p,q)))/n^{3/2}\ge
\left(\frac{8}{3\pi} \sqrt{p+q-(p-q)^2}+o(1)\right).
\end{equation}
Combining Eq. \ref{G} with Eq. \ref{H}, we obtain the energy of random signed graph $G_n(p,q)$ immediately. 
\begin{theorem}\label{Th2}
Almost every random signed graph $G_n(p,q)$ enjoys the equation as follows:
$$
\mathcal{E}(A(G_n(p,q)))=
n^{3/2}\left(\frac{8}{3\pi} \sqrt{p+q-(p-q)^2}+o(1)\right). $$
\end{theorem}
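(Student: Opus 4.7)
My plan is to exploit the linear decomposition $A(G_n(p,q)) = \overline{S} + (p-q)A(K_n)$, where $\overline{S}$ is the centered matrix defined above, and to argue that the rank-one-like correction $(p-q)A(K_n)$ is negligible on the $n^{3/2}$ scale.

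First I would establish that almost surely $\mathcal{E}(\overline{S}) = n^{3/2}\bigl(\tfrac{8}{3\pi}\sqrt{p+q-(p-q)^2}+o(1)\bigr)$. The off-diagonal entries of $\overline{S}$ are independent (up to symmetry) with mean $0$ and variance $\sigma^2 = p+q-(p-q)^2$, so Theorem \ref{Wigner} tells me that the ESD of $n^{-1/2}\overline{S}$ converges almost surely to the semicircle law $\Phi_{0,\sigma}$. Fact 2 rewrites $\mathcal{E}(n^{-1/2}\overline{S})/n$ as the first absolute moment of this ESD, and Lemma \ref{lemma4} transfers the limit to $\int|x|\,d\Phi_{0,\sigma}(x)$. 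A direct evaluation of this semicircle integral gives $\tfrac{8}{3\pi}\sigma$, and rescaling by $n^{3/2}$ yields the claimed asymptotic for $\mathcal{E}(\overline{S})$.

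Next I would handle the correction. Since $A(K_n)=J_n-I_n$ has spectrum $\{n-1\}\cup\{-1\}^{n-1}$, one immediately gets $\mathcal{E}((p-q)A(K_n))=2|p-q|(n-1)=O(n)$. Applying the Ky Fan inequality (Lemma \ref{lemma5}) to $A(G_n(p,q)) = \overline{S} + (p-q)A(K_n)$ produces $\mathcal{E}(A(G_n(p,q))) \le \mathcal{E}(\overline{S}) + 2|p-q|(n-1)$, and applying it to the rearrangement $\overline{S} = A(G_n(p,q)) + \bigl(-(p-q)A(K_n)\bigr)$ gives the matching lower bound $\mathcal{E}(A(G_n(p,q))) \ge \mathcal{E}(\overline{S}) - 2|p-q|(n-1)$. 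Dividing by $n^{3/2}$ and using $O(n) = o(n^{3/2})$ absorbs the correction into $o(1)$ and squeezes to the theorem.

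The step I expect to be the most delicate is the passage from weak ESD convergence to convergence of the first absolute moment: weak convergence alone does not suffice because $|x|$ is unbounded and mass could a priori escape to infinity as $n$ grows. This is exactly what Lemma \ref{lemma4} addresses, splitting the integral on $I=[-2,2]$ (bounded convergence, since the semicircle is compactly supported) and on $I^c$ (domination by $x^2$ via Lemma \ref{lemma2} combined with the squeeze argument of Lemma \ref{lemma3}). Once that analytic input is in hand, the rest of the argument is linear-algebraic bookkeeping via Ky Fan.
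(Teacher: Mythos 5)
Your proposal is correct and follows essentially the same route as the paper: the same decomposition $A(G_n(p,q))=\overline{S}+(p-q)A(K_n)$, the same use of Wigner's semicircle law and Lemma \ref{lemma4} to get $\mathcal{E}(\overline{S})=n^{3/2}\bigl(\tfrac{8}{3\pi}\sqrt{p+q-(p-q)^2}+o(1)\bigr)$, and the same two-sided application of the Ky Fan inequality to absorb the $O(n)$ correction from $(p-q)A(K_n)$. Your remark correctly identifies the passage from weak ESD convergence to convergence of the first absolute moment as the delicate analytic step, which is precisely what Lemmas \ref{lemma2}--\ref{lemma4} in the paper handle.
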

\textbf{Remark 1.}If $q=0$, it is obvious that $\mathcal{G}_n(p,0)=\mathcal{G}_n(p)$. Thus by Theorem \ref{Th2}, we can get the energy of random graph $G_n(p)$, which can also be seen in \cite{Li1} and \cite{Li3}.
\begin{coro}
Almost every random graph $G_n(p)$ enjoys the equation as follows:
$$
\mathcal{E}(A(G_n(p)))=
n^{3/2}\left(\frac{8}{3\pi} \sqrt{p-p^2}+o(1)\right). $$
\end{coro}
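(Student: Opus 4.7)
The plan is to obtain the corollary as an immediate specialization of Theorem~\ref{Th2} to the case $q=0$. The only things requiring verification are that the signed-graph model $\mathcal{G}_n(p,0)$ coincides with the classical Erd\H{o}s--R\'enyi model $\mathcal{G}_n(p)$, and that the variance factor $p+q-(p-q)^2$ simplifies to $p-p^2$ when $q=0$.

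First I would observe that under $\mathcal{G}_n(p,0)$ each unordered pair of vertices is independently assigned a positive edge with probability $p$, a negative edge with probability $0$, and no edge with probability $1-p$. Since negative edges appear with probability zero, the resulting signed graph almost surely contains only positive edges, and as recalled in Section~1 a signed graph with all-positive labeling is identified with its underlying simple graph; its signed adjacency matrix then equals the classical adjacency matrix. Consequently $A(G_n(p,0))$ has the same distribution as $A(G_n(p))$, and in particular $\mathcal{E}(A(G_n(p)))$ equals $\mathcal{E}(A(G_n(p,0)))$ in distribution. This is precisely the identification $\mathcal{G}_n(p,0)=\mathcal{G}_n(p)$ already indicated in Remark~1.

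Next I would substitute $q=0$ into the conclusion of Theorem~\ref{Th2}. The expression under the square root becomes $p+0-(p-0)^2=p-p^2$, so Theorem~\ref{Th2} yields
$$\mathcal{E}(A(G_n(p)))=n^{3/2}\left(\frac{8}{3\pi}\sqrt{p-p^2}+o(1)\right)\quad\text{a.s.,}$$
which is exactly the stated identity. There is no substantive obstacle here: all the real analytic work (the Wigner semicircle limit for the centered matrix $\overline{S}$, the tail control supplied by Lemmas~\ref{lemma2}--\ref{lemma4}, and the Ky Fan perturbation argument of Lemma~\ref{lemma5}) has already been absorbed into Theorem~\ref{Th2}. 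The only sanity check worth recording is that the variance $\sigma^{2}=p+q-(p-q)^{2}$ degenerates to the Bernoulli variance $p(1-p)\ge 0$ at $q=0$, so Wigner's theorem genuinely applies at this boundary of parameter space and no degeneracy issues appear when passing to the limit $n\to\infty$.
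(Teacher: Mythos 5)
Your proposal is correct and follows exactly the route the paper takes in Remark~1: identify $\mathcal{G}_n(p,0)$ with $\mathcal{G}_n(p)$ and substitute $q=0$ into Theorem~\ref{Th2}, noting that $p+q-(p-q)^2$ reduces to $p-p^2$. Nothing further is needed.
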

Recall that a signed graph $G$ is balanced if each cycle of $G$ has even number of negative edges, otherwise it is unbalanced. The spectral criterion for the balance of signed graphs given by Acharya \cite{Acharya} is as follows:
\begin{lemma}\label{add}
A signed graph is balanced if and only if it is co-spectral with its underlying graph.
\end{lemma}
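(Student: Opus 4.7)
The statement is an iff, so the plan naturally splits into the two directions, and the right language is that of switching equivalence.

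For the forward direction, I would invoke Harary's structure theorem for balanced signed graphs: $\Gamma(G)$ is balanced iff its vertex set admits a partition $V(G)=V_1\cup V_2$ such that every edge inside $V_i$ is positive and every edge between $V_1$ and $V_2$ is negative. Given such a partition, define the diagonal switching matrix $D=\mathrm{diag}(d_1,\ldots,d_n)$ with $d_i=+1$ for $v_i\in V_1$ and $d_i=-1$ for $v_i\in V_2$. A direct computation shows $D\,A(\Gamma(G))\,D=A(G)$, so the two matrices are similar via the orthogonal matrix $D$ and therefore cospectral.

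For the reverse direction, the plan is to compare traces of powers. Using Fact~1, $\mathrm{tr}(A(\Gamma(G))^k)$ counts closed walks of length $k$ in $G$ weighted by the product of the signs of the traversed edges, while $\mathrm{tr}(A(G)^k)$ is the unweighted count. In particular, for every $k\ge 1$,
\[
\mathrm{tr}(A(\Gamma(G))^k)\;\le\;\mathrm{tr}(A(G)^k),
\]
with equality for a given $k$ exactly when every closed walk of length $k$ has positive sign. Cospectrality forces equality for all $k$, so every closed walk in $G$ must be positive. Applying this to a closed walk that traverses a chosen cycle $C$ of $G$ once and returns yields $\sigma(C)=+$, so every cycle of $\Gamma(G)$ is positive and $\Gamma(G)$ is balanced.

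The main obstacle is the reverse implication, and the subtle point is making the jump from ``every closed walk is positive'' to ``every cycle is positive'' clean: one has to choose, for each cycle $C$, a closed walk whose total sign reduces to $\sigma(C)$ (e.g.\ a walk that traverses $C$ once and traverses every other edge an even number of times, or more simply a closed walk supported on $C$ alone when $C$ is based at one of its vertices). An alternative route that avoids any such bookkeeping is the Harary--Sachs coefficient theorem for signed graphs, which expresses the coefficients of the characteristic polynomial as signed sums over elementary figures with each cycle contributing a factor $-2\sigma(C)$; cospectrality then forces the contribution of every cycle to match the unsigned case, yielding $\sigma(C)=+$ for all $C$. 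Either route completes the proof.
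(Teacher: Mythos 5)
The paper does not prove this lemma at all: it is quoted as Acharya's spectral criterion and justified only by the citation \cite{Acharya}, so there is no in-paper argument to compare against. Your proposal is a correct, self-contained proof. The forward direction via Harary's bipartition theorem and the switching matrix $D$ is exactly right: $(DA(\Gamma(G))D)_{ij}=d_i\sigma(v_iv_j)a_{ij}d_j=a_{ij}$ in both cases (same part: $d_id_j=1$, positive edge; different parts: $d_id_j=-1$, negative edge), and conjugation by the orthogonal involution $D$ preserves the spectrum. The converse via traces of powers is also sound: cospectrality forces $\mathrm{tr}(A(\Gamma(G))^k)=\mathrm{tr}(A(G)^k)$ for all $k$, and since the signed trace is a sum of $\pm1$ terms indexed by the same set of closed walks that the unsigned trace counts, equality forces every closed walk to be positive. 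The ``subtle point'' you flag is in fact not an obstacle: for any cycle $C$ of length $\ell$, traversing $C$ once from one of its vertices \emph{is} a closed walk of length $\ell$ whose sign is exactly $\sigma(C)$, so $\sigma(C)=+$ follows immediately with no bookkeeping; the Sachs-coefficient alternative is unnecessary. The only cosmetic caveat is that Harary's partition theorem should be stated with one part allowed to be empty (the all-positive case), which is standard.
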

By Lemma \ref{add}, we can see that if a singed graph $\Gamma(G)$ is balanced only if $\mathcal{E}(\Gamma(G))=\mathcal{E}(G)$, where $G$ is the underlying graph of $\Gamma(G)$.
For signed random graph $G_n(p,q)\in \mathcal{G}_n(p,q)$, the underlying graph belongs to $\mathcal{G}_n(p+q)$. It is obvious that with probability 1,
\begin{eqnarray*}
% \nonumber to remove numbering (before each equation)
  \frac{\mathcal{E}(A(G_n(p,q)))-\mathcal{E}(A(G_n(p+q)))}{n^{3/2}} &=& \left(\frac{8}{3\pi}\sqrt{p+q-(p-q)^2}-
  \frac{8}{3\pi}\sqrt{p+q-(p+q)^2}+o(1)\right) \\
   &=& \left(\frac{32pq}{3\pi(\sqrt{p+q-(p-q)^2}+\sqrt{p+q-(p+q)^2})}+o(1)\right)
\end{eqnarray*}

Thus we have that if $p\neq0$ and $q\neq0$, the energy of random signed graph $G_n(p,q)$ is larger than the energy of its underlying graph. Hence by Lemma \ref{add}, we can get the following result which has been proved in \cite{balance}.
\begin{theorem}
Almost every random signed graph $G_n(p,q)$ is unbalanced.
\end{theorem}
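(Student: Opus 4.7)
The plan is to use Lemma~\ref{add} contrapositively: a balanced signed graph must be cospectral with its underlying graph, and in particular must share its energy. Hence to prove $G_n(p,q)$ is almost surely unbalanced it suffices to exhibit, almost surely, a positive gap between the energy of $G_n(p,q)$ and the energy of its underlying graph.

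The first step is to identify the distribution of the underlying graph: by construction each pair of vertices is joined, by an edge of either sign, independently with probability $p+q$, so the underlying graph of $G_n(p,q)$ is distributed as an Erd\v os--R\'enyi random graph in $\mathcal{G}_n(p+q)$. Applying Theorem~\ref{Th2} to $G_n(p,q)$ itself gives, a.s., the asymptotic $\mathcal{E}(A(G_n(p,q)))\sim \tfrac{8}{3\pi}\sqrt{p+q-(p-q)^2}\,n^{3/2}$, while applying it with parameters $(p+q,0)$ (equivalently the Corollary above) gives, a.s., $\mathcal{E}(A(G_n(p+q)))\sim \tfrac{8}{3\pi}\sqrt{(p+q)-(p+q)^2}\,n^{3/2}$.

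The last step is to compare the two leading constants. Since $(p-q)^2<(p+q)^2$ whenever $p,q>0$, the first strictly exceeds the second, and a one-line rationalization shows that the normalized difference of the two energies converges a.s.\ to a strictly positive constant depending only on $p$ and $q$. Both a.s.\ statements hold simultaneously on the intersection of two probability-one events, so almost every $G_n(p,q)$ has energy differing from that of its underlying graph for all sufficiently large $n$, and therefore, by Lemma~\ref{add}, fails to be balanced.

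There is no serious obstacle; the only mild points to check are that the underlying graph is indeed distributed as $\mathcal{G}_n(p+q)$ (immediate from edge independence) and that the argument implicitly assumes $p,q>0$ --- the degenerate cases $p=0$ or $q=0$ are either trivially balanced (all positive edges) or require a separate non-bipartiteness argument (all negative edges), and this restriction should be flagged in the statement.
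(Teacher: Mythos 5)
Your proposal follows essentially the same route as the paper: invoke Lemma~\ref{add} contrapositively, note that the underlying graph lies in $\mathcal{G}_n(p+q)$, apply Theorem~\ref{Th2} to both the signed graph and its underlying graph, and observe that the gap between the leading constants $\frac{8}{3\pi}\sqrt{p+q-(p-q)^2}$ and $\frac{8}{3\pi}\sqrt{p+q-(p+q)^2}$ is a strictly positive constant when $p,q>0$ (the paper rationalizes it to $\frac{32pq}{3\pi(\sqrt{p+q-(p-q)^2}+\sqrt{p+q-(p+q)^2})}$). Your explicit flagging of the degenerate cases $p=0$ or $q=0$ is a reasonable caveat that the paper only handles implicitly via the phrase ``if $p\neq 0$ and $q\neq 0$,'' but it does not change the argument.
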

\section{The energy of the random multipartite signed graph}
We begin with the definition of the random multipartite graph and random multipartite signed graph. We use $K_{n;\beta_1,\ldots,\beta_k}$ to denote the
complete $k$-partite graph with vertex set $V=\{v_1,v_2,\ldots,v_n\}$ whose parts $V_1, \ldots,V_k (k\ge2)$ are such that
$|V_i| = n\beta_i = n\beta_i(n), i = 1,\ldots, k$. The random $k$-partite graph model $\mathcal{G}_{n;\beta_1,\ldots,\beta_k}(p)$ consists of all random $k$-partite graphs in which the
edges are chosen independently with probability $p$ from the set of edges of $K_{n;\beta_1,\ldots,\beta_k}$. The random $k$-partite signed graph model $\mathcal{G}_{n;\beta_1,\ldots,\beta_k}(p,q)$ consists of all random $k$-partite signed graphs in which the
edges are chosen to be positive edges with probability $p$ or negative edges with probability $q$ from the set of edges of $K_{n;\beta_1,\ldots,\beta_k}$. We denote by
$A_{n,k}(p):= A(G_{n;\beta_1,\ldots,\beta_k}(p))$ the adjacency matrix of the random $k$-partite graph $G_{n;\beta_1,\ldots,\beta_k}(p)\in \mathcal{G}_{n;\beta_1,\ldots,\beta_k}(p)$ and
$S_{n,k}(p,q):=A(G_{n;\beta_1,\ldots,\beta_k}(p,q))= (s_{ij})_{n\times n}$ the adjacency matrix of the random $k$-partite signed graph $G_{n;\beta_1,\ldots,\beta_k}(p,q)\in \mathcal{G}_{n;\beta_1,\ldots,\beta_k}(p,q)$. We can see that $S_{n,k}(p,q)$ satisfies the following properties:
\begin{itemize}
  \item $s_{ij}$'s, $1 \le i < j \le n$, are independent random variables with $s_{ij} = s_{ji}$;
  \item for $i\in V_t$ and $j\in V\setminus V_t$, $\mathbb{P}(s_{ij}=1)=p, \mathbb{P}(s_{ij}=-1)=q$ and $\mathbb{P}(s_{ij}=0)=1-p-q$;
  \item for $i,j\in V_t$, $\mathbb{P}(s_{ij}=0)=1$.
\end{itemize}
The rest of this section will be divided into two parts. In the first part, we will establish lower and upper bounds of the
energy of the random multipartite signed graph $G_{n;\beta_1,\ldots,\beta_k}(p,q)$; In the second part, we will obtain an exact estimate of
the energy of the random bipartite signed graph $G_{n;\beta_1,\beta_2}(p,q)$.
\subsection{bounds of the energy of random multipartite signed graph}
\begin{theorem}
Let $G_{n;\beta_1,\ldots,\beta_k}(p,q)\in \mathcal{G}_{n;\beta_1,\ldots,\beta_k}(p,q)$ with $\lim_{n\rightarrow\infty}\beta_i(n)>0, i=1,2,\ldots,k$. Then almost surely
$$
n^{3/2}(1-\sum_{i=1}^k\beta_i^{3/2})
\le\mathcal{E}(G_{n;\beta_1,\ldots,\beta_k}(p,q))\left(\frac{8}{3\pi} \sqrt{p+q-(p-q)^2}+o(1)\right)^{-1}\le n^{3/2}(1+\sum_{i=1}^k\beta_i^{3/2}).
$$
\end{theorem}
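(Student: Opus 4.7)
The plan is to reduce this multipartite estimate to the already-known full-graph estimate (Theorem~\ref{Th2}) by a block decomposition, and then to pry the energy of the multipartite matrix between two energies via the Ky~Fan inequality in Lemma~\ref{lemma5}. Concretely, I would couple the random multipartite signed graph $G_{n;\beta_1,\ldots,\beta_k}(p,q)$ with a random signed graph $G_n(p,q)\in\mathcal{G}_n(p,q)$ in such a way that on the cross-block edges their signs agree. Relative to this coupling, write
\begin{equation*}
A(G_n(p,q)) \;=\; S_{n,k}(p,q) \;+\; D,
\end{equation*}
where $D=\mathrm{diag}(B_1,\ldots,B_k)$ is the block-diagonal matrix whose $i$-th block $B_i$ is itself the adjacency matrix of an independent random signed graph on the vertex set $V_i$, i.e.\ $B_i\in\mathcal{G}_{n\beta_i}(p,q)$. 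Since $|V_i|=n\beta_i$ with $\lim_{n\to\infty}\beta_i(n)>0$, each $B_i$ also satisfies the hypotheses required to apply Theorem~\ref{Th2}.

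Next I would invoke Lemma~\ref{lemma5} in two directions. Taking $X=S_{n,k}(p,q)$, $Y=D$, $Z=A(G_n(p,q))$ yields
\begin{equation*}
\mathcal{E}(S_{n,k}(p,q)) \;\ge\; \mathcal{E}(A(G_n(p,q))) \;-\; \mathcal{E}(D),
\end{equation*}
while taking $X=A(G_n(p,q))$, $Y=-D$, $Z=S_{n,k}(p,q)$ and using $\mathcal{E}(-D)=\mathcal{E}(D)$ yields
\begin{equation*}
\mathcal{E}(S_{n,k}(p,q)) \;\le\; \mathcal{E}(A(G_n(p,q))) \;+\; \mathcal{E}(D).
\end{equation*}
Because $D$ is block-diagonal, its spectrum is the disjoint union of the spectra of the $B_i$, so $\mathcal{E}(D)=\sum_{i=1}^{k}\mathcal{E}(B_i)$.

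Now apply Theorem~\ref{Th2} to $A(G_n(p,q))$ and to each block $B_i$ (the latter on $n\beta_i$ vertices). The intersection of the $k+1$ probability-one events on which these asymptotics hold still has probability one, so almost surely
\begin{equation*}
\mathcal{E}(A(G_n(p,q))) = n^{3/2}\!\Bigl(\tfrac{8}{3\pi}\sqrt{p+q-(p-q)^2}+o(1)\Bigr),
\qquad
\mathcal{E}(B_i) = (n\beta_i)^{3/2}\!\Bigl(\tfrac{8}{3\pi}\sqrt{p+q-(p-q)^2}+o(1)\Bigr).
\end{equation*}
Summing the $\mathcal{E}(B_i)$ gives $\mathcal{E}(D)=n^{3/2}\bigl(\sum_i\beta_i^{3/2}\bigr)\bigl(\tfrac{8}{3\pi}\sqrt{p+q-(p-q)^2}+o(1)\bigr)$, and substituting into the two Ky~Fan inequalities delivers exactly the claimed upper and lower bounds after dividing through by $\tfrac{8}{3\pi}\sqrt{p+q-(p-q)^2}+o(1)$.

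The only genuinely delicate point is the coupling step: one must justify that, when the off-diagonal cross-block entries of $A(G_n(p,q))$ are prescribed to be those of $S_{n,k}(p,q)$, the complementary intra-block entries that appear in $D$ form independent random signed graphs on each $V_i$ with the right marginal distribution. This is immediate from the independence of edges in $\mathcal{G}_n(p,q)$, so no real obstacle remains; the rest is bookkeeping. One may also note that $\sum_i\beta_i=1$ with $\beta_i>0$ and $k\ge 2$ forces $\sum_i\beta_i^{3/2}<1$, so the lower bound is nontrivial.
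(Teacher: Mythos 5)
Your proof is correct and follows essentially the same route as the paper: the identity $A(G_n(p,q))=S_{n,k}(p,q)+D$ with $D$ block-diagonal is exactly the paper's decomposition $S_{n,k}(p,q)+S'_{n,k}(p,q)=S_n(p,q)$, and the two applications of Lemma~\ref{lemma5} together with Theorem~\ref{Th2} applied to the whole graph and to each block reproduce the paper's argument verbatim. Your explicit attention to the coupling (that prescribing the cross-block entries leaves independent signed graphs with the correct marginals on each $V_i$) is a point the paper passes over silently, but it does not change the substance of the proof.
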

\begin{proof}
Let $V_1, V_2, \ldots, V_k$ be the parts of the random multipartite signed graph $G_{n;\beta_1,\ldots,\beta_k}(p,q)$ satisfying $|V_i|=n\beta_i, i=1,2,\ldots,k$. Let $S_n(p,q)$ be the adjacency matrix of the random signed graph $G_n(p,q)$ and $S_{n,k}(p,q)$ be the adjacency matrix of $G_{n;\beta_1,\ldots,\beta_k}(p,q)$. It is obvious that
\begin{equation}\label{2.1}
   S_{n,k}(p,q)+S'_{n,k}(p,q)=S_n(p,q),
\end{equation}
where
$$
S'_{n,k}(p,q)=\begin{pmatrix}
                S_{n\beta_1}(p,q) &  &  &  \\
                 & S_{n\beta_2}(p,q) &  &  \\
                 &  & \ddots &  \\
                 &  &  & S_{n\beta_k}(p,q) \\
              \end{pmatrix}.
$$
Note that for $i=1,2,\ldots, k, \lim_{n\rightarrow\infty}\beta_i(n)>0$, thus by Theorem \ref{Th2}, we have that
$$
\mathcal{E}(S_{n\beta_i}(p,q))=\mathcal{E}(A(G_{n\beta_i}(p,q)))=
(n\beta_i)^{3/2}\left(\frac{8}{3\pi} \sqrt{p+q-(p-q)^2}+o(1)\right) \ \ a.s.
$$
Therefore,
\begin{equation}\label{2.2}
\mathcal{E}(S'_{n,k}(p,q))=\sum_{i=1}^k\mathcal{E}(S_{n\beta_i}(p,q))=
n^{3/2}\left(\frac{8}{3\pi} \sqrt{p+q-(p-q)^2}+o(1)\right)\sum_{i=1}^k\beta_i^{3/2} \ \ a.s.
\end{equation}

By Eq. \ref{2.1} and Lemma \ref{lemma5}, we have
\begin{equation}\label{2.3}
  |\mathcal{E}(S_n(p,q))-\mathcal{E}(S'_{n,k}(p,q))|\le\mathcal{E}(S_{n,k}(p,q))\le \mathcal{E}(S_n(p,q))+\mathcal{E}(S'_{n,k}(p,q))
\end{equation}
Recall that $0<\beta_i<1$ and $\sum_{i=1}^k\beta_i=1$, we can see that $\sum_{i=1}^k\beta_i^{3/2}<\sum_{i=1}^k\beta_i=1$.

Hence by Eq. \ref{2.2} and Eq. \ref{2.3}, we can deduce that almost every random multipartite signed graph $G_{n;\beta_1,\ldots,\beta_k}(p,q)$ satisfies the inequality
$$
n^{3/2}(1-\sum_{i=1}^k\beta_i^{3/2})
\le\mathcal{E}(G_{n;\beta_1,\ldots,\beta_k}(p,q))\left(\frac{8}{3\pi} \sqrt{p+q-(p-q)^2}+o(1)\right)^{-1}\le n^{3/2}(1+\sum_{i=1}^k\beta_i^{3/2}).
$$
\end{proof}
\subsection{the energy of random bipartite signed graph}
In what follows, we investigate the energy of random bipartite signed graphs $G_{n;\beta_1,\beta_2}(p,q)$ satisfying
$\lim_{n\rightarrow\infty}\beta_i(n) > 0 (i = 1, 2)$, and present the precise estimate of $\mathcal{E}(G_{n;\beta_1,\beta_2}(p,q))$ via Mar\v cenko-Pastur
Law.
For convenience, set $n_1= n\beta_1, n_2 =n\beta_2$ and $\beta_1/\beta_2\rightarrow y\in (0, \infty)$. Let $A(K_{n_1, n_2})$ be adjacency matrix of complete bipartite graph $K_{n_1, n_2}$ and set
\begin{equation}\label{eq3.1}
\overline{S}_{n,2}(p,q)=S_{n,2}(p,q)-(p-q)A(K_{n_1,n_2})=\begin{pmatrix}
                                                        0 & X \\
                                                        X^T & 0 \\
                                                      \end{pmatrix}
,
\end{equation}
where $X$ is a random matrix of order $n_1\times n_2$ in which the entries $X_{ij}$ are iid. (independent and identically distributed) with mean zero and variance $\sigma^2=p+q-(p-q)^2$.
In \cite{Bai}, Bai formulated the LSD of $\frac{1}{n_1}XX^T$ by moment approach.
\begin{lemma}\label{lemma3.2}\cite{Bai}
Suppose that $X$ is a random matrix of order $n_1\times n_2$ in which the entries $X_{ij}$ are iid. with mean zero and variance $\sigma^2$ and $\beta_1/\beta_2\rightarrow y\in (0, \infty)$. Then, with probability 1, the ESD $\Phi_{\frac{1}{n_1}XX^T}(x)$ converges weakly to the Mar\v cenko-Pastur Law $F_y(x)$
 as $n\rightarrow\infty$, where $F_y$ has the density
$$
f_y(x)=\left\{
          \begin{array}{ll}
            \frac{1}{2\pi xy\sigma^2}\sqrt{(b-x)(x-a)}, & \hbox{if $a\le x\le b$;} \\
            0, & \hbox{otherwise.}
          \end{array}
        \right.
$$
where $a=\sigma^2(1-\sqrt{y})^2$ and  $b=\sigma^2(1+\sqrt{y})^2$.
\end{lemma}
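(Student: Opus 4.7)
The plan is to establish the Mar\v cenko-Pastur law via the moment method, in the spirit of the treatment used earlier for the semicircle law. Because $\frac{1}{n_1}XX^T$ is positive semi-definite and the candidate limit $F_y$ has compact support $[a,b]$, it is uniquely determined by its moments via Carleman's condition. Hence the task reduces to proving that for each fixed positive integer $k$, the $k$-th empirical moment $\int x^k\,d\Phi_{\frac{1}{n_1}XX^T}(x)$ converges almost surely to $\int x^k\,dF_y(x)$; weak almost-sure convergence of the ESD then follows by a standard continuity argument.

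Writing $M := \tfrac{1}{n_1}XX^T$, the workhorse is the trace expansion
$$
\frac{1}{n_1}\mathrm{tr}(M^k) = \frac{1}{n_1^{k+1}}\sum_{i_1,\ldots,i_k}\sum_{j_1,\ldots,j_k} X_{i_1 j_1} X_{i_2 j_1} X_{i_2 j_2} \cdots X_{i_k j_k} X_{i_1 j_k},
$$
in which each choice of indices encodes a closed walk of length $2k$ alternating between row-vertices and column-vertices of the complete bipartite graph on $n_1 + n_2$ vertices. Since the $X_{ij}$ are iid and mean-zero, taking expectations kills every term in which some edge is traversed an odd number of times. The dominant surviving terms correspond to walks in which each edge is traversed exactly twice, i.e.\ rooted plane trees with $k$ edges whose vertices are bicolored by row/column type. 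Partitioning these trees according to how many vertices are of each color, tracking the factor $\sigma^{2k}$ coming from the entry variances, and using $n_1/n_2 \to y$ to convert vertex counts into powers of $y$, one recovers the Narayana-type sum
$$
\sigma^{2k}\sum_{r=0}^{k-1}\frac{1}{r+1}\binom{k}{r}\binom{k-1}{r}\, y^r,
$$
which is exactly the $k$-th moment of $F_y$.

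To promote convergence of expected moments to almost sure convergence, I would establish a variance estimate $\mathrm{Var}\bigl(\tfrac{1}{n_1}\mathrm{tr}(M^k)\bigr) = O(n^{-2})$ via a parallel combinatorial count of double walks whose edges are paired across two copies of the trace. The Borel-Cantelli lemma then converts the $L^1$ convergence of every fixed moment into almost sure convergence; combined with Carleman's condition this upgrades to weak almost-sure convergence of $\Phi_{\frac{1}{n_1}XX^T}$ to $F_y$.

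The principal obstacle is the combinatorial identification underlying the moment count. Unlike Wigner's semicircle law, where the dominant pairings are counted uniformly by Catalan numbers, the bipartite structure forces one to split contributions according to how many of the tree's $k+1$ vertices lie among the row-indices versus the column-indices, and the aspect ratio $y$ enters precisely through this split; disentangling the resulting Narayana-weighted sum is the main technical step. Verifying that lower-order configurations, namely walks with edges traversed more than twice or with fewer than $k+1$ distinct vertices, contribute only $o(1)$ after the normalization by $n_1^{-(k+1)}$ is routine but notationally involved.
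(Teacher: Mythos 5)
The paper offers no proof of this lemma at all: it is quoted as a known result (the Mar\v cenko--Pastur law) from Bai's survey, so there is no internal argument to compare against. Your sketch is the standard moment-method proof of that law -- trace expansion, reduction to bipartite double trees, the Narayana-weighted moment formula $\sigma^{2k}\sum_{r=0}^{k-1}\frac{1}{r+1}\binom{k}{r}\binom{k-1}{r}y^{r}$ for $F_y$, a variance bound plus Borel--Cantelli for almost sure convergence, and Carleman's condition to pass from moments to weak convergence -- and as an outline this is the right route and matches what Bai's reference actually does.

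Two concrete issues, though. First, a normalization mismatch: with $M=\frac{1}{n_1}XX^{T}$ and $y=\lim n_1/n_2$, your own trace expansion $n_1^{-(k+1)}\sum(\cdots)$ does \emph{not} produce the sum you quote. A double tree with $q$ column-vertices contributes $\sigma^{2k}n_1^{\,k+1-q}n_2^{\,q}n_1^{-(k+1)}=\sigma^{2k}y^{-q}$, so the limit comes out in negative powers of $y$; already for $k=1$ one gets $\frac{1}{n_1^2}\sum_{i,j}X_{ij}^2\to\sigma^2/y$, whereas $\int x\,dF_y(x)=\sigma^2$. The Narayana sum you write is the moment sequence of the stated density $f_y$ and is what arises from the normalization $\frac{1}{n_2}XX^{T}$. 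This inconsistency is in fact already present in the lemma as printed (the paper normalizes by $n_1$ but quotes the density belonging to the $n_2$-normalization), so you have in effect proved the corrected statement; but a proof of the lemma should either fix the normalization or carry the factor $y^{-1}$ honestly through the combinatorics. Second, the hypotheses grant only mean zero and finite variance, while the subdominant configurations in the moment computation (edges traversed four or more times) are weighted by higher moments of $X_{11}$, which are not assumed finite; the standard remedy is a preliminary truncation and recentering of the entries, which your sketch omits. Neither point is fatal, but both must be addressed for the argument to close.
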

\begin{lemma}\label{lemma3.3}
Let $X$ be a real matrix of order $n_1\times n_2$ and $A=\begin{pmatrix}
                                                        0 & X \\
                                                        X^T & 0 \\
                                                      \end{pmatrix}.$
If $\lambda_1(XX^T)\ge \cdots \ge \lambda_{n_1}(XX^T)$ are the eigenvalues of $XX^T$, then
$$
\mathcal{E}(A)=2\sum_{i=1}^{n_1}\sqrt{\lambda_i(XX^T)}.
$$
\end{lemma}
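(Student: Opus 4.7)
The plan is to exploit the block structure of $A$: its spectrum is symmetric about zero, and its squared eigenvalues are precisely the eigenvalues of $XX^T$ (together with those of $X^T X$, which coincide with those of $XX^T$ on the nonzero part). Summing $|\mu|$ over the spectrum of $A$ then counts each $\sqrt{\lambda_i(XX^T)}$ twice, producing the factor $2$ in the statement.

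First I would establish the $\pm$ symmetry. If $(u^T, w^T)^T$ is an eigenvector of $A$ with eigenvalue $\mu$, then the block action $A(u^T,w^T)^T = ((Xw)^T,(X^T u)^T)^T$ gives $Xw = \mu u$ and $X^T u = \mu w$. Replacing $w$ by $-w$ immediately yields
\begin{equation*}
A \begin{pmatrix} u \\ -w \end{pmatrix} = \begin{pmatrix} -Xw \\ X^T u \end{pmatrix} = -\mu \begin{pmatrix} u \\ -w \end{pmatrix},
\end{equation*}
so $-\mu$ is also an eigenvalue with the same multiplicity, and the nonzero eigenvalues of $A$ come in $\pm$ pairs.

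Next I would compute $A^2$ directly as a block product, obtaining
\begin{equation*}
A^2 = \begin{pmatrix} XX^T & 0 \\ 0 & X^T X \end{pmatrix}.
\end{equation*}
Therefore, $\mu^2$ is an eigenvalue of $XX^T$ whenever $\mu$ is an eigenvalue of $A$ whose eigenvector has a nonzero top block $u$. Conversely, for any $\lambda>0$ with $XX^T u = \lambda u$ and $u\neq 0$, a routine check shows that $(u^T,(X^T u/\sqrt{\lambda})^T)^T$ is an eigenvector of $A$ with eigenvalue $\sqrt{\lambda}$. Combined with the $\pm$ symmetry, the nonzero eigenvalues of $A$ are exactly $\pm\sqrt{\lambda}$, once for each nonzero eigenvalue $\lambda$ of $XX^T$, counted with multiplicity.

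Summing absolute values yields $\mathcal{E}(A) = 2\sum_{i=1}^{n_1}\sqrt{\lambda_i(XX^T)}$, since the zero eigenvalues of $XX^T$ contribute nothing on either side. No step is truly difficult; the only mild care needed is in the multiplicity bookkeeping when $n_1 \neq n_2$, because $XX^T$ and $X^T X$ then have different sizes. This is handled by recalling that their nonzero spectra coincide with matching multiplicities, while any additional zero eigenvalues of $X^T X$ merely produce further zero eigenvalues of $A$, which do not affect the energy.
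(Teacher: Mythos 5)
Your proposal is correct, and its first half (the $\pm$ pairing of eigenvalues via the sign flip $(u,w)\mapsto(u,-w)$) is exactly the argument the paper uses. Where you diverge is in how the nonzero spectrum of $A$ is tied to that of $XX^T$: the paper multiplies $\lambda I_n - A$ by a block-triangular matrix to get the characteristic-polynomial identity $\lambda^{n}\det(\lambda I_n-A)=\lambda^{n}\cdot\lambda^{n_2-n_1}\det(\lambda^2 I_{n_1}-XX^T)$, which delivers all eigenvalues with their multiplicities in one stroke; you instead compute $A^2=\mathrm{diag}(XX^T,\,X^TX)$ and construct explicit eigenvectors $(u,\,X^Tu/\sqrt{\lambda})$ for each $\lambda>0$. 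Your route is equally elementary and arguably more transparent about where the eigenvectors live, but it shifts the burden onto the multiplicity bookkeeping you mention at the end: to make it airtight you should note that $u\mapsto(u,\,X^Tu/\sqrt{\lambda})$ is injective on the $\lambda$-eigenspace of $XX^T$, and conversely that any eigenvector of $A$ with eigenvalue $\mu\neq 0$ has nonzero top block $u$ which determines the bottom block as $w=X^Tu/\mu$, so the multiplicities match exactly. With that sentence added, the two proofs are of essentially equal length and rigor; the determinant identity simply automates the counting that you do by hand.
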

\begin{proof}
Let $\begin{pmatrix}
                  Y_{1\times n_1} \\
                  Z_{1\times n_2} \\
                \end{pmatrix}$ be the eigenvector corresponding to the eigenvalue $\lambda$ of $A$. Then
$$
A\begin{pmatrix}
                  Y \\
                  Z \\
                \end{pmatrix}=\begin{pmatrix}
     0 & X \\
     X^T & 0 \\
   \end{pmatrix}\begin{pmatrix}
                  Y \\
                  Z \\
                \end{pmatrix}=\begin{pmatrix}
                  XZ \\
                  X^TY \\
                \end{pmatrix}=\lambda\begin{pmatrix}
                   Y \\
                  Z \\
                \end{pmatrix}=\begin{pmatrix}
                  \lambda Y \\
                 \lambda Z \\
                \end{pmatrix}
$$
i.e., $XZ=\lambda Y$ and $X^TY=\lambda Z$. Hence
$$
A\begin{pmatrix}
                  Y \\
                  -Z \\
                \end{pmatrix}=\begin{pmatrix}
     0 & X \\
     X^T & 0 \\
   \end{pmatrix}\begin{pmatrix}
                  Y \\
                  -Z \\
                \end{pmatrix}=\begin{pmatrix}
                  -XZ \\
                  X^TY \\
                \end{pmatrix}=\begin{pmatrix}
                  -\lambda Y \\
                 \lambda Z \\
                \end{pmatrix}=-\lambda\begin{pmatrix}
                   Y \\
                 -Z \\
                \end{pmatrix}
$$
Therefore $\begin{pmatrix}
                   Y \\
                 -Z \\
                \end{pmatrix}$
is the eigenvector corresponding to the eigenvalue $-\lambda$ of $A$. Assume that $\lambda_1(A),\ldots,\lambda_r(A)$ are the positive eigenvalues of $A$, we have that
$$
|\lambda I_n-A|=\lambda^{n-2r}\prod_{i=1}^{r}(\lambda^2-\lambda_i^2(A)).
$$
By the equation
$$
\begin{pmatrix}
  \lambda I_{n_1} & -X \\
  -X^T & \lambda I_{n_2} \\
\end{pmatrix}\begin{pmatrix}
  \lambda I_{n_1} & 0 \\
  -X^T & \lambda I_{n_2} \\
\end{pmatrix}=\begin{pmatrix}
  \lambda^2 I_{n_1}-XX^T & -X \\
  0 & \lambda I_{n_2} \\
\end{pmatrix}.
$$
We have that
$$
\lambda^n \lambda^{n-2r}\prod_{i=1}^{r}(\lambda^2-\lambda_i^2(A))=\lambda^{n_2}|\lambda^2-XX^T|=
\lambda^{n_2}\prod_{i=1}^{n_1}(\lambda^2-\lambda_i(XX^T)).
$$
Therefore we have that
$$
\mathcal{E}(A)=2\sum_{i=1}^r\lambda_i(A)=2\sum_{i=1}^{n_1}\sqrt{\lambda_i(XX^T)},
$$
as desired.
\end{proof}
By Eq. \ref{eq3.1} and Lemma \ref{lemma3.3}, we have that
\begin{eqnarray*}
% \nonumber to remove numbering (before each equation)
  \mathcal{E}(\overline{S}_{n,2}(p,q)) &=& 2\sum_{i=1}^{n_1}\sqrt{\lambda_i(XX^T)} \\
   &=&  2\sqrt{n_1}\sum_{i=1}^{n_1}\sqrt{\lambda_i(\frac{1}{n_1}XX^T)}\\
   &=& 2n_1^{3/2}\int\sqrt{x}d\Phi_{\frac{1}{n_1}XX^T}
\end{eqnarray*}
With a similar analysis with Section 2, by Lemma \ref{lemma3.2}, we have that
\begin{eqnarray*}
% \nonumber to remove numbering (before each equation)
  \lim_{n\rightarrow\infty}\int\sqrt{x}d\Phi_{\frac{1}{n_1}XX^T} &=& \int\sqrt{x}dF_y(x)
  \ \ a.s. \\
   &=&  \int_a^b\frac{\sqrt{x}}{2\pi xy\sigma^2}\sqrt{(b-x)(x-a)}dx\\
   &=& \frac{1}{\pi y\sigma^2}\int_{\sqrt{a}}^{\sqrt{b}}\sqrt{(b-x^2)(x^2-a)}dx\\
   &=& \frac{1}{\pi y\sigma^2}\int_{|1-\sqrt{y}|}^{1+\sqrt{y}}\sqrt{4y-(1+y-x^2)^2}dx
\end{eqnarray*}
Set $\eta=\int_{|1-\sqrt{y}|}^{1+\sqrt{y}}\sqrt{4y-(1+y-x^2)^2}dx$, we have that
\begin{eqnarray*}
% \nonumber to remove numbering (before each equation)
\mathcal{E}(\overline{S}_{n,2}(p,q))&=&\left(\frac{2\eta}{\pi y (p+q-(p-q)^2)}+o(1)\right)(n\beta_1)^{3/2} \ \ \ a.s. \\
&=& \left(\frac{2\beta_2\sqrt{\beta_1}\eta}{\pi (p+q-(p-q)^2)}+o(1)\right)n^{3/2}.
\end{eqnarray*}
%Note that the eigenvalues of $K_{n_1,n_2}$ are $\pm\sqrt{n_1n_2}$ and 0 of $n-2$ times, we have that $$\mathcal{E}((p-q)A(K_{n_1,n_2}))=2|p-q|\sqrt{n_1n_2}
%=2n\sqrt{\beta_1\beta_2}|p-q|.$$
Employing Eq. \ref{eq3.1} and Lemma \ref{lemma5}, we have that
$$
\mathcal{E}(\overline{S}_{n,2}(p,q))-\mathcal{E}((p-q)A(K_{n_1,n_2}))\le
\mathcal{E}(S_{n,2}(p,q))\le
\mathcal{E}(\overline{S}_{n,2}(p,q))-\mathcal{E}((p-q)A(K_{n_1,n_2})).
$$
Together with the fact that $$\mathcal{E}((p-q)A(K_{n_1,n_2}))=2|p-q|\sqrt{n_1n_2}
=2n\sqrt{\beta_1\beta_2}|p-q|,$$
we get
$$
\mathcal{E}(S_{n,2}(p,q))=\left(\frac{2\beta_2\sqrt{\beta_1}\eta}{\pi (p+q-(p-q)^2)}+o(1)\right)n^{3/2} \ \ \ a.s.
$$
Therefore, the following theorem is relevant.
\begin{theorem}\label{th3}
Almost every random bipartite signed graph $G_{n;\beta_1,\beta_2}(p,q)$ with $\beta_1/\beta_2\rightarrow y\in (0, \infty)$ enjoys
$$\mathcal{E}(G_{n;\beta_1,\beta_2}(p,q))=\left(\frac{2\beta_2\sqrt{\beta_1}\eta}{\pi (p+q-(p-q)^2)}+o(1)\right)n^{3/2},$$
where $\eta=\int_{|1-\sqrt{y}|}^{1+\sqrt{y}}\sqrt{4y-(1+y-x^2)^2}dx.$
\end{theorem}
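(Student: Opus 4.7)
The plan is to mimic the strategy of Section 2, but with Wigner's semicircle law replaced by the Marčenko--Pastur law, since the adjacency matrix of a bipartite signed graph naturally has an off-diagonal block structure.

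First I would center the adjacency matrix by writing
$$S_{n,2}(p,q)=\overline{S}_{n,2}(p,q)+(p-q)A(K_{n_1,n_2}),$$
so that $\overline{S}_{n,2}(p,q)$ takes the block anti-diagonal form displayed in Eq.~\eqref{eq3.1}, with $X$ an $n_1\times n_2$ random matrix whose entries are iid, mean zero, variance $\sigma^2=p+q-(p-q)^2$. Lemma~\ref{lemma3.3} then converts the energy question into one about singular values: $\mathcal{E}(\overline{S}_{n,2}(p,q))=2\sum_{i=1}^{n_1}\sqrt{\lambda_i(XX^T)}$. Rescaling gives
$$\mathcal{E}(\overline{S}_{n,2}(p,q))=2n_1^{3/2}\int\sqrt{x}\,d\Phi_{\frac{1}{n_1}XX^T}(x),$$
which is the object I would try to control by spectral methods.

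Next I would invoke Lemma~\ref{lemma3.2} (Marčenko--Pastur) to identify the LSD of $\tfrac{1}{n_1}XX^T$ as $F_y$, supported on $[a,b]=[\sigma^2(1-\sqrt y)^2,\sigma^2(1+\sqrt y)^2]$. To upgrade weak convergence to convergence of the integral $\int\sqrt{x}\,d\Phi_{\frac{1}{n_1}XX^T}(x)$, I would copy the template used for Lemma~\ref{lemma4}: the density of $F_y$ is bounded on the compact support, so the bounded convergence theorem handles the integral over any fixed interval containing $[a,b]$, while the moment method (analogue of Lemma~\ref{lemma1}, now for sample covariance matrices) controls the $x$-integral and a squeeze via Lemma~\ref{lemma3} with $a_n=0$ and $b_n$ proportional to $x$ eliminates the tail contribution. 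Then I would evaluate $\int_a^b\sqrt{x}\,f_y(x)\,dx$ with the substitution $x=u^2$, obtaining the $\eta$-integral stated in the theorem and the prefactor $1/(\pi y\sigma^2)$.

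Putting it together, $\mathcal{E}(\overline{S}_{n,2}(p,q))=\left(\frac{2\beta_2\sqrt{\beta_1}\,\eta}{\pi\sigma^2}+o(1)\right)n^{3/2}$ almost surely. Finally I would pass from $\overline{S}_{n,2}(p,q)$ to $S_{n,2}(p,q)$ by the Ky Fan inequality of Lemma~\ref{lemma5}, using that the only nonzero singular values of $A(K_{n_1,n_2})$ are $\pm\sqrt{n_1 n_2}$, so $\mathcal{E}((p-q)A(K_{n_1,n_2}))=2|p-q|\sqrt{n_1n_2}=O(n)$, which is absorbed into the $o(n^{3/2})$ error term.

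The main obstacle is the tail step: extending weak convergence of $\Phi_{\frac{1}{n_1}XX^T}$ to convergence of $\int\sqrt{x}\,d\Phi_{\frac{1}{n_1}XX^T}$ requires ruling out escape of spectral mass to $+\infty$. Lemma~\ref{lemma2}'s argument does this for Wigner matrices via the $k=2$ moment; here I would need the analogous moment identity $\int x\,d\Phi_{\frac{1}{n_1}XX^T}(x)=\frac{1}{n_1}\mathrm{tr}\!\left(\tfrac{1}{n_1}XX^T\right)$, which is a sum of iid squared entries and converges a.s.\ to $\sigma^2=\int x\,dF_y(x)$ by the strong law. Everything else is bookkeeping along the lines of the proofs already given in Section 2.
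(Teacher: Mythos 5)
Your proposal follows the paper's own proof essentially step for step: the same centering $S_{n,2}(p,q)=\overline{S}_{n,2}(p,q)+(p-q)A(K_{n_1,n_2})$, the same reduction to singular values via Lemma~\ref{lemma3.3}, the same appeal to the Mar\v cenko--Pastur law of Lemma~\ref{lemma3.2} to evaluate $\int\sqrt{x}\,d\Phi_{\frac{1}{n_1}XX^T}$, and the same Ky Fan perturbation (Lemma~\ref{lemma5}) to absorb the $O(n)$ contribution of $(p-q)A(K_{n_1,n_2})$. Your explicit tail-control step via the first moment $\frac{1}{n_1}\mathrm{tr}\bigl(\tfrac{1}{n_1}XX^T\bigr)$ is precisely the ``similar analysis with Section 2'' that the paper leaves implicit, so the argument is correct and not genuinely different from the one given there.
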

\textbf{Remark 2.}If $q=0$, it is obvious that $\mathcal{G}_{n;\beta_1,\beta_2}(p,0)=\mathcal{G}_{n;\beta_1,\beta_2}(p)$. Thus by Theorem \ref{th3}, we can get the energy of random bipartite graph $G_{n;\beta_1,\beta_2}(p)$, which can be also get in \cite{Li1}.
\begin{coro}
Almost every random bipartite graph $G_{n;\beta_1,\beta_2}(p)$ enjoys the equation as follows:
$$
\mathcal{E}(A(G_{n;\beta_1,\beta_2}(p)))=
\left(\frac{2\beta_2\sqrt{\beta_1}\eta}{\pi (p-p^2)}+o(1)\right)n^{3/2}. $$
\end{coro}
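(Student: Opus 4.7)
The plan is to recognize the corollary as a direct specialization of Theorem \ref{th3} to the case $q=0$, which is already highlighted in Remark~2. First I would observe that when negative edges are forbidden (i.e., $q=0$), the defining sampling procedure of $\mathcal{G}_{n;\beta_1,\beta_2}(p,q)$ degenerates: each edge of $K_{n;\beta_1,\beta_2}$ is present with probability $p$ and positive with probability $1$, so $\mathcal{G}_{n;\beta_1,\beta_2}(p,0)$ and $\mathcal{G}_{n;\beta_1,\beta_2}(p)$ are the same probability space of adjacency matrices. In particular, almost surely $\mathcal{E}(A(G_{n;\beta_1,\beta_2}(p))) = \mathcal{E}(A(G_{n;\beta_1,\beta_2}(p,0)))$.

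Next, I would apply Theorem \ref{th3} with $q=0$. The only quantity in the conclusion of that theorem that depends on $p$ and $q$ is the variance $\sigma^2 = p+q-(p-q)^2$; setting $q=0$ yields $p+q-(p-q)^2 = p - p^2$, while $\eta=\int_{|1-\sqrt{y}|}^{1+\sqrt{y}}\sqrt{4y-(1+y-x^2)^2}\,dx$ is independent of $p,q$ and remains unchanged. Plugging these into Theorem \ref{th3} immediately gives
\[
\mathcal{E}(A(G_{n;\beta_1,\beta_2}(p))) = \left(\frac{2\beta_2\sqrt{\beta_1}\,\eta}{\pi(p-p^2)}+o(1)\right)n^{3/2} \quad \text{a.s.},
\]
which is exactly the claimed asymptotic.

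Since the result is purely a substitution, there is no genuine obstacle; the only minor point to verify is the identification of the probability spaces $\mathcal{G}_{n;\beta_1,\beta_2}(p,0)$ and $\mathcal{G}_{n;\beta_1,\beta_2}(p)$, which is immediate from the definitions given at the start of Section~3. No additional spectral analysis (Mar\v cenko--Pastur, Wigner, or the bipartite block-matrix computation of Lemma \ref{lemma3.3}) is required beyond what is already encapsulated in Theorem \ref{th3}.
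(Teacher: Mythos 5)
Your proposal is correct and matches the paper's own justification, which is exactly the substitution $q=0$ into Theorem \ref{th3} noted in Remark~2, with the variance $p+q-(p-q)^2$ collapsing to $p-p^2$ and $\eta$ unaffected. No further argument is needed.
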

For signed random bipartite graph $G_{n;\beta_1,\beta_2}(p,q)\in \mathcal{G}_{n;\beta_1,\beta_2}(p,q)$, the underlying unsigned graph  belongs to $\mathcal{G}_{n;\beta_1,\beta_2}(p+q)$. It is obvious that that with probability 1
\begin{eqnarray*}
% \nonumber to remove numbering (before each equation)
  &&\frac{\mathcal{E}(A(G_{n;\beta_1,\beta_2}(p,q)))
  -\mathcal{E}(A(G_{n;\beta_1,\beta_2}(p+q)))}{n^{3/2}}\\ &=&
  \left(\frac{2\beta_2\sqrt{\beta_1}\eta}{\pi ((p+q)-(p-q)^2)}-\frac{2\beta_2\sqrt{\beta_1}\eta}{\pi ((p+q)-(p+q)^2)}+o(1)\right)\\
  &=&\left(\frac{-8pq\beta_2\sqrt{\beta_1}\eta}{\pi ((p+q)-(p-q)^2)(p+q)-(p+q)^2)}+o(1)\right).
\end{eqnarray*}
Thus we have that if $p\neq0$ and $q\neq0$, the energy of random signed bipartite graph $G_{n;\beta_1,\beta_2}(p,q))$ is less than the energy of its underlying unsigned graph and furthermore, we can get the following result.
\begin{theorem}
Almost every random bipartite signed graph $G_{n;\beta_1,\beta_2}(p,q)$ is unbalanced.
\end{theorem}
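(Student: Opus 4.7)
The plan is to mirror the argument used earlier in the paper for the general random signed graph case, now in the bipartite setting. The key ingredient is Acharya's spectral criterion (Lemma \ref{add}): a signed graph is balanced if and only if it is co-spectral with its underlying graph, and in particular the two must share the same energy. Hence it suffices to show that, almost surely, $\mathcal{E}(G_{n;\beta_1,\beta_2}(p,q))$ differs from the energy of its underlying unsigned graph.

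First I would invoke Theorem \ref{th3} to obtain the almost-sure asymptotic
$$\mathcal{E}(G_{n;\beta_1,\beta_2}(p,q)) = \left(\frac{2\beta_2\sqrt{\beta_1}\,\eta}{\pi(p+q-(p-q)^2)} + o(1)\right)n^{3/2}.$$
Since the underlying unsigned graph of $G_{n;\beta_1,\beta_2}(p,q)$ is distributed exactly as $G_{n;\beta_1,\beta_2}(p+q)$, a second application of Theorem \ref{th3} (equivalently, the corollary recorded in Remark 2) with the pair $(p+q, 0)$ in place of $(p,q)$ gives
$$\mathcal{E}(G_{n;\beta_1,\beta_2}(p+q)) = \left(\frac{2\beta_2\sqrt{\beta_1}\,\eta}{\pi((p+q)-(p+q)^2)} + o(1)\right)n^{3/2}.$$

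Next I would subtract these two expressions. The elementary identity
$$[(p+q)-(p-q)^2] - [(p+q)-(p+q)^2] = (p+q)^2 - (p-q)^2 = 4pq$$
makes the leading constant of the difference $\bigl(\mathcal{E}(G_{n;\beta_1,\beta_2}(p,q)) - \mathcal{E}(G_{n;\beta_1,\beta_2}(p+q))\bigr)/n^{3/2}$ proportional to $-pq$, and in particular a strictly negative, $n$-independent constant whenever $p>0$ and $q>0$. Therefore almost surely
$$\mathcal{E}(G_{n;\beta_1,\beta_2}(p,q)) \neq \mathcal{E}(G_{n;\beta_1,\beta_2}(p+q))$$
for all sufficiently large $n$, so by Lemma \ref{add} the random bipartite signed graph fails to be co-spectral with its underlying graph and is unbalanced a.s.

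The main subtlety, rather than a genuine obstacle, lies in the boundary cases. If $q=0$, the signed graph has no negative edges and is vacuously balanced; if $p=0$, every edge is negative and since the underlying graph is bipartite every cycle has even length, producing an even count of negative edges and again a balanced graph. Thus the theorem implicitly requires $pq>0$, which is precisely the regime in which the leading-order constant computed above is strictly non-zero. No additional probabilistic estimates are needed beyond the two almost-sure applications of Theorem \ref{th3}; the argument reduces to the algebraic identity above combined with Acharya's criterion.
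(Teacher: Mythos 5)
Your argument is exactly the one the paper uses: apply Theorem \ref{th3} to $G_{n;\beta_1,\beta_2}(p,q)$ and to its underlying graph $G_{n;\beta_1,\beta_2}(p+q)$, observe that the difference of the leading constants is proportional to $-pq$ and hence nonzero when $pq>0$, and conclude from Acharya's criterion (Lemma \ref{add}) that the graph is a.s.\ not co-spectral with its underlying graph, hence unbalanced. Your extra remark on the degenerate cases $p=0$ or $q=0$ (where the bipartite signed graph is in fact balanced) is a correct clarification of a hypothesis the paper leaves implicit.
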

\section{Conclusion}
In this paper, we obtain an exact estimate of energy for the random signed graph $G_n(p,q)$ and we establish lower and upper bounds to the energy of random multipartite signed graphs $G_{n;\beta_1,\ldots,\beta_k}$. Furthermore, by comparing the energy of random signed graph (resp. random bipartite signed graph) with the energy of its underlying graph, we deduce that a.s. $G_n(p,q)$ (resp. $G_{n;\beta_1,\beta_2}(p,q)$) are unbalanced.


\begin{thebibliography}{10}
\bibitem{Acharya}B.D. Acharya, Spectral criterion for the cycle balance in networks, J. Graph Theory 4 (1980) 1-11.
\bibitem{Bai}Z. Bai, Methodologies in spectral analysis of large dimensional random matrices, a review, Statist. Sinica 9 (1999) 611-677.
\bibitem{Be}F. Belardo, S. Simi\'c , On the Laplacian coefcients of signed graphs, Linear Algebra Appl. 475 (2015) 94-113.
\bibitem{Bi}P. Billingsley, Probability and Measure, third ed., John Wiley \& Sons, Inc., 1995.
\bibitem{B}B. Bollob\'as, Random Graphs, Cambridge Univ. Press, Cambridge, 2001.
\bibitem{Ch}S. Chaiken, A combinatorial proof of the all minors matrix tree theorem, SIAM J. Algebr Discrete Methods. 3(2) (1982) 319-329.
\bibitem{D}P. Deift, Orthogonal Polynomials and Random Matrices: A Riemann-Hilbert Approach, New York University, Courant Institute of Mathematical Sciences, AMS, 2000.
\bibitem{De}B. Deradass, L. Archarya, Spectral criterion for cycle balance in networks, J. Graph Theory, 4(1) (1980) 1-11.
\bibitem{Li1}W. Du, X. Li, Y. Li, The Laplacian energy of random graphs, J. Math. Anal. Appl. 368 (2010) 311-319.
%\bibitem{Li2}W. Du, X. Li, Y. Li, Various energies of random graphs, MATCH Commun. Math. Comput. Chem. 64 (2010) 251-260.
\bibitem{Li3}W. Du, X. Li, Y. Li, The energy of random graphs, Linear Algebra Appl. 435 (2011) 2334-2346.
\bibitem{balance}A. El Maftouhi, Y. Manoussakis, O. Megalakaki, Balance in Random Signed Graphs. Internet Mathematics 8(4)(2012) 364-380.
\bibitem{ER}P. Erd\v os, A. R\' enyi, On random graphs I, Publ. Math. Debrecen 6 (1959) 290-97.
\bibitem{Fan}K. Fan, Maximum properties and inequalities for the eigenvalues of completely continuous operators, Proc. Natl. Acad. Sci.
USA 37(1951) 760-766.
\bibitem{za}K.A. Germina, Shahul Hameed K., T. Zaslavsky, On products and line graphs of signed graphs, their eigenvalues and energy, Linear Algebra Appl. 435
(2010) 2432-2450.
\bibitem{Gutman}I. Gutman, The energy of a graph, Ber. Math. Statist. Sekt. Forschungsz. Graz. 103 (1978) 1-22.
\bibitem{GL}I. Gutman, X. Li, Energies of Graphs-Theory and Applications, Math. Chem. Monogr., vol.17, ISBN978-86-6009-033-3, 2016, III+290 pp., Kragujevac.
\bibitem{GLZ}I. Gutman, X. Li, J. Zhang, Graph energy, in: M. Dehmer, F. Emmert-Streib (Eds.), Analysis of Complex Networks: From Biology to Linguistics, Wiley-VCH Verlag, Weinheim , 2009, pp. 145-174.
\bibitem{Harary}F. Harary, On the notion of balanced in a signed graph, Michigan Math J. 2(1) (1953) 143-146.
\bibitem{LSG}X. Li, Y. Shi, I. Gutman, Graph Energy, Springer, New York, 2012.
\bibitem{M}M. Mehta, Random Matrices, second ed., Academic Press, 1991.
\bibitem{Wigner1}E.P.Wigner, Characteristic vectors of bordered matrices with infinite dimensions, Ann. Math. 62 (1955) 548-564.
\bibitem{Wigner2}E.P.Wigner, On the distribution of the roots of certain symmetric matrices, Ann. Math. 67 (1958) 325-327.
\bibitem{W}J. Wishart, The generalized product moment distribution in samples from a normal multivariate population, Biometrika 20A (1928) 32-52.
\bibitem{Y}K. Yates, H\v uckel Molecular Orbital Theory, Academic Press, New York, 1978.
\bibitem{Yang}J. Yang, L. You, I. Gutman, Bounds on the distance Laplacian energy of graphs, Kragujevac Jour. Math. 37(2)(2013) 245-255.
\bibitem{Za}T. Zaslavsky, Signed graphs, Discrete Appl. Math. 4(1) (1982) 47-74.
\end{thebibliography}
\end{document}